\newcommand{\bbC}{{\mathbb C}}
\newcommand{\bbR}{{\mathbb R}}
\newcommand{\Ls}{\mathcal{L}}
\def\bbC{{\mathbb C}}
\def\bbR{{\mathbb R}}
\def\bX{{\mathbf X}}
\newcommand{\N}{\mathbb{N}}
\newcommand{\R}{\mathbb{R}}
\def\sf{{\mathfrak s}}
\newcommand{\cf}{\mathfrak{c}}
\newcommand{\tf}{\mathfrak{\tau}}
\def\Re{\operatorname{Re}}
\newcommand{\loc}{\mathrm{loc}}
\newcommand{\glob}{\mathrm{glob}}
\def\ov{\overline}
\def\e{\mathrm{e}}
\def\supp{\operatorname{supp}}
\numberwithin{equation}{section}
\DeclareFontFamily{U}{mathx}{\hyphenchar\font45}
\DeclareFontShape{U}{mathx}{m}{n}{
      <5> <6> <7> <8> <9> <10>
      <10.95> <12> <14.4> <17.28> <20.74> <24.88>
      mathx10
      }{}
\DeclareSymbolFont{mathx}{U}{mathx}{m}{n}
\DeclareMathAccent{\widecheck}{0}{mathx}{"71}
\DeclareMathAccent{\wideparen}{0}{mathx}{"75}
\newcommand{\leqnomode}{\tagsleft@true}
\newcommand{\reqnomode}{\tagsleft@false}
\newcommand{\dd}{\mathrm{d}}
\theoremstyle{theorem}
\newtheorem{theorem}{\sc \textbf{Theorem}}[section]  
\newtheorem{proposition}[theorem]{\sc \textbf{Proposition}}   
\newtheorem{lemma}[theorem]{\sc \textbf{Lemma}}
\theoremstyle{remark}
\newtheorem{remark}[theorem]{\sc \textbf{Remark}}
\begin{document}

\title[The Sobolev embedding constant]{The Sobolev embedding constant on Lie groups}  
 
\author[T.\ Bruno]{Tommaso Bruno}
\address{Department of Mathematics: Analysis, Logic and Discrete Mathematics, Ghent University,
Krijgslaan 281, 9000 Ghent, Belgium}
\email{tommaso.bruno@ugent.be}

\author[M.\ M.\ Peloso]{Marco M.\ Peloso}
\address{Dipartimento di Matematica, 
Universit\`a degli Studi di Milano, 
Via C.\ Saldini 50,  
20133 Milano, Italy}
\email{marco.peloso@unimi.it}

\author[M.\ Vallarino]{Maria Vallarino}
\address{Dipartimento di Scienze Matematiche ``Giuseppe Luigi Lagrange'',
  Politecnico di Torino, Corso Duca degli Abruzzi 24, 10129 Torino,
  Italy - Dipartimento di Eccellenza 2018-2022}
\email{maria.vallarino@polito.it}

\keywords{Lie groups, Sobolev embeddings, best constant, Moser--Trudinger inequality}

\thanks{{\em Math Subject Classification} 26D10, 43A80, 46E35}

\thanks{All authors were  partially supported by the grant {\em Fractional Laplacians and subLaplacians on Lie groups and trees} of the Gruppo Nazionale per l'Analisi
  Matematica, la Probabilit\`a e le loro Applicazioni (GNAMPA) of the
  Istituto Nazionale di Alta Matematica (INdAM). T.\ Bruno gratefully acknowledges
  support by the Research Foundation – Flanders (FWO) through the
  postdoctoral grant 12ZW120N}  

\begin{abstract}

In this paper we estimate the Sobolev embedding
constant  on general noncompact Lie groups, for
sub-Riemannian inhomogeneous Sobolev spaces endowed with a left invariant
measure. The bound that we obtain, up to a constant depending only on the group and its sub-Riemannian
structure, reduces to the best known bound for
the classical inhomogeneous
Sobolev embedding constant on $\R^d$. As an application,  we prove 
 local and global Moser--Trudinger inequalities.
\end{abstract}

\maketitle

\section{Introduction}

The aim of this paper is to investigate the behaviour of the Sobolev
embedding constant in a sub-Riemannian setting, in particular on
noncommutative Lie groups. 

In the Euclidean space $\R^d$, if $\Delta$ denotes the classical {positive}
Laplacian and $\dot{L}^p_\alpha=\Delta^{\alpha/2}L^p$ the homogeneous
Sobolev space, it is well known that $\dot{L}^p_{\alpha}
\hookrightarrow L^q$ when $1<p<\infty$, $0\leq \alpha<d/p$ and 
$\frac{1}{q}=\frac{1}{p}-\frac{\alpha}{d}$. The best constant and the extremal functions for this
embedding have a long
history and  a multitude of applications, and  they can be obtained
from the analysis of the Hardy--Littlewood--Sobolev inequality. Lieb~\cite{Lieb} determined the best constant in the ``diagonal
case'' $p=q'$, and found an estimate in the other cases; see also earlier works by Aubin~\cite{Aubin} and Talenti~\cite{Talenti}.   If $L^p_\alpha=(I+\Delta)^{\alpha/2}L^p $
is  the inhomogeneous Sobolev space, then it is also well known that
$L^p_{\alpha}\hookrightarrow L^q$ when $1<p,q<\infty$, $0\leq \alpha<d/p$ and 
$\frac{1}{q} \geq \frac{1}{p}-\frac{\alpha}{d}$. The related best embedding
constant is not known, though it  can be 
bounded by the best constant for the embedding of homogeneous
spaces, up to  a dependence on the dimension $d$. 

\smallskip

On a general noncompact Lie group $G$, the natural substitutes of the
  Laplacian are sub-Laplacians with drift $\Ls$, see~\cite{BPTV},
  which are symmetric with respect to the left Haar measure
  $\lambda$. This setting, and this type of operators in particular, were
  studied in~\cite{HMM, Agrachev-et-al}, and an associated theory of
  Sobolev spaces, that we shall denote by $L^p_\alpha(\lambda)$,  was
  developed in~\cite{BPTV}. Since the Riesz transforms are not known
  to be bounded on $L^p$ when $1<p<\infty$ in such generality, while
  it is known that the appropriately shifted ones are bounded,
  see~\cite{BPTV}, it seems more natural to consider Sobolev spaces
  endowed with an inhomogeneous norm, which reduces to the Sobolev
  norm of $L^{p}_\alpha$ in the Euclidean case.

Our main result is an estimate for the constant of the embedding
$L^p_\alpha(\lambda) \hookrightarrow L^q(\lambda)$,  when
$1<p<\infty$, $0\leq \alpha<d/p$ and $\frac{1}{q}=\frac{1}{p}-\frac{\alpha}{d}$, of
the form $C\, S(p,q)$, where 
\begin{equation}\label{Spq}
S(p,q): = \min\bigg( \frac{q^{1/p'}}{p-1}, \frac{{p'}^{1/q}}{q'-1}\bigg)
\end{equation}
and $C$ depends only
on the group and its chosen sub-Riemannian structure.   Here and
throughout the paper, given any 
$p\in(1,\infty)$ we denote by $p'$ its conjugate exponent, that
is, $p'=p/(p-1)$. In terms of the dependence on $p$ and $q$, such a
bound is comparable to the best known bound in $\R^d$ for the Sobolev
embedding constant for inhomogeneous spaces associated with the
Laplacian, while it is new in noncommutative groups. In addition to this, we shall also
discuss the more general case of relatively invariant measures where,
despite the Sobolev embeddings in general fail~\cite{BPTV}, we are
able to prove alternative results. 

A well-established application of the Sobolev embedding theorem, both
in the homogeneous and inhomogeneous case, is the classical
Moser--Trudinger inequality~\cite{Trudinger, Moser},  
which arises as a substitute of boundedness for
functions in the Sobolev space $L^{p}_{d/p}$, as this does not embed in
$L^\infty$.  By means of our  quantitative Sobolev 
embedding, we prove quantitative versions of local and global
Moser--Trudinger inequalities. Our approach is close in spirit, and
inspired by, \cite{Oz}. We refer the reader also to 
the recent work~\cite{RuzYess}.  

\smallskip

The analysis of sub-Laplacians and more generally of subelliptic
differential operators has attracted a great deal of attention since
their appearance in the study of Kohn-Laplacians and the renowned
sum-of-squares theorem of H\"ormander.  It appears then very natural to 
 extend geometric
 and functional inequalities from the Euclidean, elliptic case
 to a subelliptic setting, also in a quantitative
 form. Earlier breakthroughs were, e.g., Sobolev embeddings on stratified Lie groups~\cite{Folland} and the Poincar\'e inequality for sums of squares on $\R^d$~\cite{Jerison}. Among more recent works, we mention the Sobolev embedding theorem  on
 unimodular Lie groups~\cite{CRTN}, a lower bound for the Hausdorff--Young constant
on general Lie groups~\cite{CMMP}, the best constants for Sobolev
and Gagliardo--Nirenberg inequalities on graded groups~\cite{RuzYess}, and Poincar\'e inequalities on Lie groups~\cite{RS,BPV3}. This paper fits into this order of ideas and line of
research; we refer the reader also to~\cite{FR, RTY, BPTV} and the
references therein. We emphasize that our setting is a general (connected) Lie
group, endowed with a left Haar measure which,
in general, has exponential volume growth and is non-doubling.

\smallskip

The structure of the paper is as follows. In Section~\ref{s: setting}, we describe the setting and all the preliminary results we shall need. Section~\ref{Sec_SEC} is the core of the paper, and contains the proof of the quantitative Sobolev embedding, whose constant is compared in Section~\ref{sec:Eucl} with the Euclidean ones. In Section~\ref{Sec_MT} we prove a quantitative Moser--Trudinger inequality, and in Section~\ref{Sec_gen} we discuss the case of more general measures.

\subsection*{Acknowledgements} We thank the anonymous referees for carefully reading the manuscript and making a number of suggestions and comments that led us to improve the clarity of our presentation. 

\section{Setting and Preliminaries}\label{s: setting}

Let $G$  be a noncompact connected Lie group with identity $e$. Let $\lambda$ be a left Haar measure on $G$, and $\delta$ be the modular function.

Let $\mathbf{X}= \{ X_1,\dots, X_\ell\}$ be a family of left-invariant
linearly independent vector fields which satisfy H\"ormander's
condition. Let $d_C(\, \cdot\, ,\,  \cdot\, )$ be its associated
left-invariant Carnot--Carathéodory distance. We let $|x|=d_C(x,e)$,
and denote by $B_r$ the ball centred at $e$ of radius $r$. We denote by $V(r)=\lambda(B_r)$ the measure of
of the ball $B_r$ with respect to $\lambda$. We recall
(cf.~\cite{Guiv,Varopoulos1}) that there exist two constants $d\in
\N^*$ and $D>0$ such that  
\begin{equation}\label{pallepiccolegrandi}
\begin{split}
C^{-1} r^d \leq V(r) \leq C r^d\quad \forall r\in (0,1], \qquad 
 V(r)\leq C \e^{Dr}\quad \; \, \forall r\in (1,\infty),
 \end{split}
\end{equation}
where $C>0$ is independent of $r$. We emphasize that $d$ is uniquely determined by $G$ and $\mathbf{X}$, while the set of $D>0$ such that~\eqref{pallepiccolegrandi} holds is independent of $\mathbf{X}$ but does not have a minimum in general; consider, e.g., the case when $G$ has polynomial growth. From this point on, we fix a $D>0$ for which~\eqref{pallepiccolegrandi} holds, and observe that the metric measure space $(G, d_C,\lambda)$ is locally
doubling, but not doubling in general. 

If $p\in [1,\infty)$, the spaces of (equivalent classes of) measurable
functions whose $p$-power is integrable with respect to $\lambda$
will be denoted by $L^p(\lambda)$, or simply $L^p$, and endowed with the usual norm
which we shall denote by $\| \cdot \|_{L^p(\lambda)}$. The space
$L^\infty$ is defined analogously.
The convolution between two functions $f$ and $g$, when it exists, is defined by 
\[
f*g(x) =\int_G f(xy)g(y^{-1})\, \dd \lambda(y),\qquad  x\in G\,.
\]
We recall Young's inequality, which has the following form~\cite{HR}: if $1\leq p\leq q\leq \infty$ and $r\geq 1$ is such that $\frac{1}{p}+ \frac{1}{r}=1+\frac{1}{q}$, then
\begin{equation}\label{Young}
\begin{split}
  \|f*g\|_{L^q(\lambda)} &\leq \|f\|_{L^p(\lambda)}
   \| \widecheck{g}\|_{L^r(\lambda)}^{r/p'} \|g\|_{L^r(\lambda)}^{r/q}, \qquad  (q<\infty)\\
\|f*g\|_{L^\infty} & \leq \|f\|_{L^p(\lambda)}  \| \widecheck{g}\|_{L^{p'}(\lambda)},
   \end{split}
\end{equation}
where $\widecheck{g}(x)= g(x^{-1})$. We denote by $\Ls$ the intrinsic sub-Laplacian on $G$ associated with $\bX$, see~\cite{Agrachev-et-al}, 
\begin{equation*}
\Ls = - \sum_{j=1}^\ell (X_j^2 +(X_j\delta)(e) X_j),
\end{equation*}
which is symmetric on $L^2(\lambda)$, and essentially self-adjoint on $C_c^\infty(G)$, see~\cite{HMM}. We shall denote by $\Ls$ as well its unique self-adjoint extension.

The operator $\Ls$ generates a diffusion semigroup, i.e.\
$(\e^{-t\Ls})_{t>0}$ extends to a contraction semigroup on
$L^p(\lambda)$ for every $p \in [1, \infty]$ (see~\cite{HMM}) whose
infinitesimal generator, with a slight abuse of notation, we still
denote by $\Ls$. We denote by $p_t^\delta$
the convolution kernel of $\e^{-t\Ls}$, and we recall that by
~\cite[Theorems VIII.2.9, VIII.4.3 and IX.1.3]{VCS} there exist
constants $b, c>0$ depending only on $G$ and $\bf{X}$ such that 
\begin{equation}\label{heatkernelestimate}
p_t^\delta(x)  \leq c \, (1\wedge t)^{-\frac{d}{2}} \, \e^{-\frac{1}{4} t \cf(\delta)^2}\,  \e^{-b \frac{|x|^2}{t}}, \qquad x\in G,\,t>0,
\end{equation}
where $ \cf(\delta) = ( |X_1\delta(e)|^2 + \cdots +
|X_\ell\delta(e)|^2)^{1/2}$. Let $b_0 = \sqrt{b}/2$, and define
\begin{equation}\label{fixedtranslation}
\tf_\delta = \max \left\{\frac{2}{b} \left[2D+b_0 \right]^2 - \frac{1}{4}\cf(\delta)^2, 1\right\}.
\end{equation}

Following~\cite{BPTV}, when $p\in (1,\infty)$ and $\alpha> 0$ we
define the Sobolev spaces $L^p_\alpha(\lambda)$ as the set of
functions $f\in L^p(\lambda)$ such that $(\tau_\delta I +
\Ls)^{\alpha/2} f\in L^p(\lambda)$, endowed with the norm 
\begin{equation}\label{equivtranslation}
\| f\|_{L^p_{\alpha}(\lambda)} = \|(\tau_\delta I + \Ls)^{\alpha/2}f\|_{L^p(\lambda)}.
\end{equation}
If $\alpha=0$, we let $L^p_0(\lambda)=L^p(\lambda)$. We recall
that~\eqref{equivtranslation} is equivalent to the norm
$\|f\|_{L^p(\lambda)} + \|\Ls^{\alpha/2}
f\|_{L^p(\lambda)}$, see~\cite{BPTV}. The reason for choosing the shift $\tau_\delta$ in the definition of $L^p_\alpha(\lambda)$ will be clarified later on; for more details about $\tau_\delta$, we refer the reader to the beginning of Section~\ref{sec:Eucl} below.

In~\cite{BPTV} the Sobolev embeddings $L^p_\alpha(\lambda) \hookrightarrow L^q(\lambda)$ when $0<\alpha<d/p$ and $q> p$ are such that $\frac{1}{q}=
\frac{1}{p}-\frac{\alpha}{d}$, were established.  In this paper we find an explicit bound for the
embedding constants, in the spirit which we now explain.

Throughout the paper, we shall disregard any dependence of the
embedding constants on $G$ and $\bf{X}$, which are assumed to be fixed
once and for all from this point on. We shall, instead, obtain
explicit results in terms of the dependence on $p$, $q$ and
$\alpha$. A generic constant depending only on $G$ and $\bf{X}$ will
be denoted by $C$ or $C(G,\bf{X})$, and its value may vary from line
to line. Recall in particular that $d=d(G,\bf{X})$.

For $\alpha>0$, let $G_{\delta}^{\alpha}$ be the convolution kernel of
$(\tau_\delta I +\Ls)^{-\alpha/2}$. Let 
\begin{equation}\label{Glocglob}
  G_{\delta}^{\alpha, \loc} =G_{\delta}^{\alpha}\mathbf{1}_{B_1},
  \qquad G_{\delta}^{\alpha, \glob} = G_{\delta}^{\alpha}\mathbf{1}_{B_1^c}.
\end{equation}
The following is a refined version of~\cite[Lemma 4.1]{BPTV}.

\begin{lemma}\label{Lemma4.1-revised}
  There exists $C=C(G, \mathbf{X})>0$ such
  that, for $\alpha\in (0,d)$ and $x\in G$,
\begin{align*}
|G_{\delta}^{\alpha, \loc} (x)| &\leq C\, \frac{\alpha}{d-\alpha} |x|^{\alpha-d}\mathbf{1}_{B_1}(x) ,\\
|G_{\delta}^{\alpha, \glob}(x)| &\leq  C \, \e^{-(2D+b_0)|x|}\mathbf{1}_{B_1^c}(x).
\end{align*}
  \end{lemma}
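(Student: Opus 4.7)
The starting point is the subordination formula
\begin{equation*}
G_\delta^\alpha(x) = \frac{1}{\Gamma(\alpha/2)}\int_0^\infty t^{\alpha/2-1}\,e^{-\tau_\delta t}\, p_t^\delta(x)\,\dd t,
\end{equation*}
valid since $\tau_\delta\geq 1>0$, combined with the Gaussian bound~\eqref{heatkernelestimate}. Setting $A:=\tau_\delta+\cf(\delta)^2/4$, property~\eqref{fixedtranslation} yields $A\geq \tfrac{2}{b}(2D+b_0)^2$ and $A\geq 1$, and the estimate reduces to controlling
\begin{equation*}
\frac{1}{\Gamma(\alpha/2)}\int_0^\infty t^{\alpha/2-1}(1\wedge t)^{-d/2}\,e^{-At-b|x|^2/t}\,\dd t
\end{equation*}
in the two regimes $|x|\leq 1$ and $|x|>1$. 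In both cases I split the $t$-integral at $t=1$ to match the piecewise definition of $(1\wedge t)^{-d/2}$.

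For the local bound, on $(0,1]$ the substitution $s=b|x|^2/t$ identifies the inner integral with $(b|x|^2)^{(\alpha-d)/2}\,\Gamma((d-\alpha)/2,b|x|^2)\leq C\,|x|^{\alpha-d}\,\Gamma((d-\alpha)/2)$. On $[1,\infty)$, the bound $t^{\alpha/2-1}\leq t^{d/2-1}$ (valid since $t\geq 1$ and $\alpha<d$) together with $A\geq 1$ gives an inner integral at most $\Gamma(d/2)$ uniformly in $\alpha$. To divide by $\Gamma(\alpha/2)$, I invoke the functional equation $\Gamma(z)=\Gamma(z+1)/z$ at the two endpoints $z=\alpha/2$ and $z=(d-\alpha)/2$: since $1+\alpha/2$ and $1+(d-\alpha)/2$ lie in the compact interval $[1,1+d/2]$ on which $\Gamma$ is pinched between positive constants, one gets
\begin{equation*}
\frac{\Gamma((d-\alpha)/2)}{\Gamma(\alpha/2)} \leq C\,\frac{\alpha}{d-\alpha},\qquad \frac{1}{\Gamma(\alpha/2)} \leq C\alpha,
\end{equation*}
with $C=C(d)$. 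The second contribution is then at most $C\alpha\leq Cd\cdot\alpha/(d-\alpha)$, which combined with $|x|^{\alpha-d}\geq 1$ on $B_1$ is absorbed into the desired form.

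For the global bound, the exponential decay in $|x|$ is extracted by AM--GM:
\begin{equation*}
\tfrac{1}{2}\bigl(At+b|x|^2/t\bigr)\geq \sqrt{Ab}\,|x|\geq \sqrt{2}\,(2D+b_0)|x|\geq (2D+b_0)|x|,
\end{equation*}
so $e^{-At-b|x|^2/t}\leq e^{-(2D+b_0)|x|}\cdot e^{-At/2-b|x|^2/(2t)}$. On $(0,1]$, using $|x|>1$ I replace $e^{-b|x|^2/(2t)}$ by $e^{-b/(2t)}$, and the substitution $u=1/t$ gives $\int_1^\infty u^{d/2-\alpha/2-1}e^{-bu/2}\,\dd u\leq C(d,b)$, a uniform bound. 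On $[1,\infty)$ the argument from the local part applies and yields another uniform bound. Dividing by $\Gamma(\alpha/2)$ and using $1/\Gamma(\alpha/2)\leq Cd$ delivers the required $C\,e^{-(2D+b_0)|x|}$.

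The main difficulty is the sharp tracking of the $\alpha$-dependence in the local bound: the coefficient $\alpha/(d-\alpha)$, which refines~\cite[Lemma 4.1]{BPTV}, emerges only by simultaneously exploiting the asymptotic $\Gamma(z)\sim 1/z$ as $z\to 0^+$ at \emph{both} endpoints $z=\alpha/2$ and $z=(d-\alpha)/2$. The definition~\eqref{fixedtranslation} of $\tau_\delta$ is also essential, since it guarantees simultaneously $\sqrt{Ab}\geq\sqrt{2}(2D+b_0)$ (producing the correct exponential rate) and $A\geq 1$ (so that $A^{-\alpha/2}$ remains bounded uniformly in $\alpha\in(0,d)$).
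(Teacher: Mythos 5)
Your proof is correct and follows essentially the same strategy as the paper's: subordination plus the Gaussian heat kernel bound, splitting the $t$-integral at $t=1$, an AM--GM step to extract the exponential decay when $|x|>1$, and tracking the $\alpha$-dependence through the $\Gamma(\alpha/2)$ normalization. The only cosmetic difference is that you package the local inner integral as an incomplete gamma function and obtain the factor $\alpha/(d-\alpha)$ by applying $\Gamma(z)=\Gamma(z+1)/z$ at both endpoints $z=\alpha/2$ and $z=(d-\alpha)/2$, whereas the paper substitutes $u=|x|^2/t$, splits at $u=1$, and bounds the resulting pieces directly -- both routes yield the same constant.
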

\begin{proof}
  We recall that the convolution kernel $G_{\delta}^{\alpha}$ can be written as
  $$
  G_{\delta}^{\alpha}=\frac{1}{\Gamma(\alpha/2)} \int_0^\infty 
t^{\alpha/2-1} \e^{-\tau_\delta \, t} p_t^\delta\, \dd t,
  $$
so that by~\eqref{heatkernelestimate}
\[
G_{\delta}^{\alpha}(x) \le \frac{C}{\Gamma(\alpha/2)} \int_0^\infty 
t^{\alpha/2-1}(1\wedge t)^{-d/2} \e^{-(\tau_\delta +\frac{1}{4} \cf(\delta)^2) t} \e^{-b|x|^2/t}\,
\dd t \,.
\]
Set $a=\tau_\delta +\frac{1}{4} \cf(\delta)^2$. Since $at +b|x|^2/t \geq \frac{1}{2} ( at + b|x|^2/t +\sqrt{2ab}|x|)$, we  see that when $|x|\ge1$,
\begin{align*}
G_{\delta}^{\alpha}(x)
& \le \frac{C}{\Gamma(\alpha/2)}  
\e^{-\frac{1}{2} \sqrt{2ab} |x|} \int_0^\infty t^{\alpha/2 -1} ( 1 \wedge t )^{-d/2}
\e^{-\frac{at}{2}-\frac{b}{2t}}\, \dd t \le C \, \e^{-(2D+b_0)|x|} \,.
\end{align*}
On the other hand, when $|x|\le 1$, splitting the integral we have 
\begin{align*}
 G_{\delta}^{\alpha}(x) 
  &\le C\, \alpha \bigg(
  \int_0^1 t^{(\alpha-d)/2 -1} \e^{-b|x|^2/t} \,
  \dd t  +  \int_1^\infty  t^{\alpha/2 -1}  \e^{-a t } \, \dd t \bigg)\\ 
  &  \eqqcolon{C} \, \alpha\, \left(
G_1(x) + G_2(x) \right) .
\end{align*}
It is clear, since $\alpha\in (0,d)$ and $a\geq 1$, that $G_2(x)\le C$.
Since $\alpha\in (0,d)$, we also have
\begin{align*}
  G_1(x)
  & =
 |x|^{\alpha -d}\bigg(  \int_{|x|^2}^1+ \int_1^\infty \bigg) u^{(d-\alpha)/2-1} \e^{-bu} \, \dd
    u \le C\,
 |x|^{\alpha -d}\bigg(   \frac{1}{d-\alpha} (1-|x|^{d-\alpha})+1  \bigg)\,,
\end{align*}
and the conclusion follows.
\end{proof}

\section{The Sobolev embedding constant}\label{Sec_SEC}

We are now ready to state our main result.  Recall that the constant $S(p,q)$ is defined in \eqref{Spq}.

\begin{theorem}\label{teo:embed}
Let $p\in (1,\infty)$, $\alpha \in [0,d/p)$ and $q\in [p,\infty)$ be such that $\frac{1}{q}= \frac{1}{p}-\frac{\alpha}{d}$. Then there exists $A_1=A_1(G,\mathbf{X})>0$ such that for all $f\in L^{p}_{\alpha}(\lambda)$
\[
 \| f\|_{L^q(\lambda)}\leq A_1\, S(p,q) \|f\|_{L^p_\alpha(\lambda)}.
\]
\end{theorem}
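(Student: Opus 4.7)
The plan is to realize $f$ as a convolution $f = G_\delta^\alpha * g$ for $g\in L^p(\lambda)$ with $\|g\|_{L^p(\lambda)} = \|f\|_{L^p_\alpha(\lambda)}$, then estimate the local and global pieces of the kernel separately using Lemma~\ref{Lemma4.1-revised} and the two versions of Young's inequality in \eqref{Young}. Setting $g := (\tau_\delta I + \Ls)^{\alpha/2}f$ and using the decomposition \eqref{Glocglob}, the triangle inequality reduces the problem to bounding
\[
\|G_\delta^{\alpha,\loc} * g\|_{L^q(\lambda)} \quad\text{and}\quad \|G_\delta^{\alpha,\glob} * g\|_{L^q(\lambda)}
\]
each by $C\, S(p,q)\,\|g\|_{L^p(\lambda)}$.

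The global piece is the easy part. By Lemma~\ref{Lemma4.1-revised}, $|G_\delta^{\alpha,\glob}(x)|$ decays like $\e^{-(2D+b_0)|x|}$, whereas by \eqref{pallepiccolegrandi} the volume grows only like $\e^{Dr}$. Since $2D+b_0 > D$, integrating shows $\|G_\delta^{\alpha,\glob}\|_{L^\sigma(\lambda)} \leq C(G,\mathbf{X})$ for every $\sigma\in[1,\infty]$, and by the symmetry $|x|=|x^{-1}|$ the same bound holds for $\widecheck{G_\delta^{\alpha,\glob}}$. Choosing $\sigma$ with $\tfrac1p+\tfrac1\sigma = 1+\tfrac1q$ and applying \eqref{Young} gives $\|G_\delta^{\alpha,\glob}*g\|_{L^q(\lambda)} \leq C\|g\|_{L^p(\lambda)}$, which is even better than the required bound.

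For the local piece, the estimate $|G_\delta^{\alpha,\loc}(x)| \leq C\,\tfrac{\alpha}{d-\alpha}|x|^{\alpha-d}\mathbf{1}_{B_1}(x)$ from Lemma~\ref{Lemma4.1-revised}, combined with the local $d$-dimensional comparison in \eqref{pallepiccolegrandi}, shows that the dominating kernel belongs to weak $L^{d/(d-\alpha)}(\lambda)$ with norm of order $\alpha/(d-\alpha)$, and the same holds for its reflection by symmetry of the majorant under $x\mapsto x^{-1}$. One then mimics the classical Euclidean Marcinkiewicz-type argument: split the kernel at a threshold $s>0$ into $K_s^{\leq}+K_s^{>}$, apply the two formulations in \eqref{Young} to the two pieces with the $L^1$-type and $L^\infty$-type exponent choices respectively, and optimize in $s$. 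The two endpoint estimates produce, after optimization, exactly the two quantities $q^{1/p'}/(p-1)$ and ${p'}^{1/q}/(q'-1)$; since either ordering of the decomposition is admissible (again thanks to the symmetry of the majorant), one may retain the better of the two, yielding the minimum $S(p,q)$.

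The main obstacle is precisely this book-keeping of constants for the local piece: one must verify that the prefactor $\alpha/(d-\alpha)$ in Lemma~\ref{Lemma4.1-revised}, together with the constants arising from the level-set truncation and the optimization in $s$, assembles into the explicit form of $S(p,q)$ uniformly in $\alpha\in(0,d/p)$. A secondary point is to ensure that the non-unimodularity of $G$ does not degrade the estimate, which is where the symmetry $|x|=|x^{-1}|$ of the Carnot--Carath\'eodory distance, and consequently of the pointwise bounds on $G_\delta^{\alpha,\loc}$ and $G_\delta^{\alpha,\glob}$, becomes essential, since it puts the kernel and its reflection $\widecheck{(\cdot)}$ on equal footing in both inequalities of \eqref{Young}.
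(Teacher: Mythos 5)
Your reduction to the local and global kernel pieces via Lemma~\ref{Lemma4.1-revised}, and your treatment of the global piece by Young's inequality exploiting the gap between the decay rate $2D+b_0$ and the volume growth rate $D$, match the paper's argument and are correct. For the local piece, however, your plan has two genuine gaps.

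First, splitting $K_\alpha$ at a threshold $s$, applying the $L^1$-type and $L^\infty$-type versions of Young's inequality to the two pieces, and optimizing in $s$ only produces a \emph{weak}-type $(\tilde p,\tilde q)$ bound, with a constant of the form $\alpha^{\tilde p\alpha/d-1}(\tilde q/(d\tilde p'))^{(\tilde p-1)\alpha/d}$ --- not the strong-type $(p,q)$ bound with constant $q^{1/p'}/(p-1)$ that the theorem requires. Passing to strong type is a nontrivial application of the Marcinkiewicz interpolation theorem with explicit constants between two endpoints: the paper uses the pair $(1,q_1)$ with $1/q_1=1-\alpha/d$ (which needs a separate weak-$(1,q_1)$ proof with its own choice of $s(t)$) and a second pair $(p_2,q_2)$ placed past the target, then verifies that the resulting product $M_0^{1/q}M_1^{1-\theta}M_2^\theta$ collapses to $C\,\frac{d-\alpha}{\alpha}\,p'\,q^{1/p'}$ uniformly in $\alpha$. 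This is the technical core of the proof, not merely ``book-keeping,'' and your proposal does not engage with it.

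Second, the minimum in $S(p,q)$ does not come from ``either ordering of the decomposition.'' The singular piece $K_s^{\le}$ must go to the $L^1$-type Young estimate and the bounded tail to the $L^\infty$-type one; there is no admissible swap. What actually produces the second competitor ${p'}^{1/q}/(q'-1)=Q(q',p')$ is a duality argument: the direct estimate \eqref{first-half}, applied at the admissible pair $(q',p')$, together with the self-adjointness of $(\tau_\delta I+\Ls)^{-\alpha/2}$ on $L^2(\lambda)$, yields the $L^p\to L^q$ bound with constant $Q(q',p')$, and one then takes the minimum with the direct constant $Q(p,q)$. The symmetry $|x|=|x^{-1}|$ is indeed needed so that $\widecheck K_\alpha=K_\alpha$ and Young's inequality applies cleanly, but it is not the mechanism producing the two competing factors; the invariance of the condition $1/q=1/p-\alpha/d$ under $(p,q)\mapsto(q',p')$ plus duality is.
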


\begin{proof}
When $\alpha=0$ and hence $q=p$, the statement is the trivial embedding $L^p \hookrightarrow L^p$. Since the function $x \mapsto x^{1-1/x}/(x-1)$ is bounded from below for $x>1$, one sees that $S(p,p) \geq 1/c$ for some $c>0$. Then
\[
 \| f\|_{L^p(\lambda)} =\|f\|_{L^p_0(\lambda)}  \leq c \, S(p,p) \|f\|_{L^p_0(\lambda)},
\]
and from this point on we may then assume $\alpha>0$ and $q>p$. Define
\[
K_\alpha(x) =  |x|^{\alpha-d} \mathbf{1}_{B_1}(x) , \qquad \tilde{K}_\alpha(x) = \e^{-(2D+b_0)|x|}\mathbf{1}_{B_1^c}(x).
\]
We claim that 
\begin{align}
\| f\ast K_\alpha \|_{L^q(\lambda)}
&\leq C(G,\mathbf{X})  \, \frac{d-\alpha}{\alpha}
    \,  \frac{{q^{1/p'}}}{p-1}  \|f\|_{L^p(\lambda)},\label{claim}\\
\| f\ast \tilde{K}_\alpha\|_{L^q(\lambda)} &\leq C(G,\mathbf{X})  \|f\|_{L^p(\lambda)}.\label{claim2}
\end{align}
By combining these bounds and Lemma~\ref{Lemma4.1-revised}, we obtain that
\begin{equation}\label{first-half}
\| (\tau_\delta I  +\Ls )^{-\alpha/2} f\|_{L^{q}(\lambda)}
\leq  A_1(G,\mathbf{X})  \frac{{q^{1/p'}}}{p-1} \|f\|_{L^{p}(\lambda)}.
\end{equation}
Observe that $q^{1/p'}/(p-1)$ is bounded away from zero when $q\geq p>1$.
Assuming the claims for a moment, we complete the proof.
Observe that the condition $\frac1q=\frac1p-\frac{\alpha}{d}$ is
invariant under the involution $(p,q)\mapsto(q',p')$. Set $Q(p,q)=
\frac{q^{1/p'}}{p-1}$. By duality, from~\eqref{first-half} we have
$$
\| (\tau_\delta I  +\Ls )^{-\alpha/2} f\|_{L^{p'}(\lambda)}
\leq A_1 Q(p,q) \|f\|_{L^{q'}(\lambda)},
$$
that is, switching the roles of the pairs $(p,q)$ and $(q',p')$,
$$
\| (\tau_\delta I  +\Ls )^{-\alpha/2} f\|_{L^{q}(\lambda)}
\leq A_1 Q(q',p') \|f\|_{L^{p}(\lambda)}.
$$
This inequality, together with~\eqref{first-half} gives
$$
\| (\tau_\delta I  +\Ls )^{-\alpha/2} f\|_{L^{q}(\lambda)}
\leq A_1 \min\big(Q(p,q),  Q(q',p') \big) \|f\|_{L^{p}(\lambda)},
$$
which implies
$$
\|f\|_{L^{q}(\lambda)}\leq A_1 \, S(p,q) \|f\|_{L^{p}_\alpha(\lambda)}.
$$

Thus, it remains to prove the claims. 
The bound~\eqref{claim2} follows by observing that $\tilde K_\alpha =({\tilde K}_\alpha)^{\widecheck{\,}} $ and by applying Young's inequality ~\eqref{Young}
\begin{equation}\label{tilde K}
\|f\ast \tilde K_{\alpha}\|_{L^q(\lambda)}\leq \|f\|_{L^p(\lambda)} \|\tilde K_{\alpha}\|_{L^r(\lambda)}^{r(1/p'+1/q)},
\end{equation}
where $r\in (1,\infty)$ is such that
$\frac1p+\frac1r=1+\frac1q$. We then have 
$$
\begin{aligned}    
  \| \tilde K_{\alpha} \|^r_{L^r(\lambda)}   
  &\leq C  \int_{B_1^c} \e^{-r(2D+ b_0)|x|} \,\dd\lambda(x)   \\
      & \le C\,   \sum_{k=0}^\infty \int_{2^{k}\leq |x|< 2^{k+1}}  \e^{-r(2D+ b_0)|x|} \,\dd\lambda(x)    \le C\,   \sum_{k=0}^\infty  \e^{-r(2D+b_0) 2^k+D2^{k+1}}  \leq C ,
  \end{aligned}
  $$
  which combined with \eqref{tilde K} implies \eqref{claim2}. The remainder of the proof will be devoted to
show~\eqref{claim}. 

\smallskip

For $0<s\leq 1$, define $K_{\alpha,s}^{(1)} = K_\alpha\, \mathbf{1}_{B_s}$ and $K_{\alpha,s}^{(2)} = K_\alpha\, \mathbf{1}_{B_s^c}$. Notice that $K_{\alpha,s}^{(1)} = \widecheck{K}_{\alpha,s}^{(1)} $ and that the same holds for $K_{\alpha,s}^{(2)}$. Let now $\tilde p\in (1,\infty)$ and $\tilde q\in (\tilde p,\infty)$ be such that $\frac{1}{\tilde q}= \frac{1}{\tilde
  p}-\frac{\alpha}{d}$, and observe that
\begin{equation}\label{tildepqprop}
(\alpha-d)\tilde p' +d = -\frac{d \tilde{p}'}{\tilde q}, \qquad \frac{\tilde p}{\tilde q} = 1-\tilde p\frac{\alpha}{d}, \qquad \frac{1}{\tilde p'}\Big(1- \frac{\tilde p}{\tilde q}\Big) = (\tilde p-1)\frac{\alpha}{d}.
\end{equation}

    By Young's inequality~\eqref{Young}, there
exists $C>0$ depending only on $G$ and $\bf X$ such that 
\begin{equation}\label{bound1}
\| f \ast K_{\alpha,s}^{(1)}\|_{L^{\tilde p}(\lambda)}  \leq
\|f\|_{L^{\tilde p}(\lambda)}
\|{K}_{\alpha,s}^{(1)}\|_{L^{1}(\lambda)}^{1/\tilde p} \|
  \widecheck{K}_{\alpha,s}^{(1)}\|_{L^{1}(\lambda)}^{1/\tilde p'}  \leq C\frac{1}{\alpha} s^\alpha
\|f\|_{L^{\tilde p}(\lambda)} 
\end{equation}
and 
\begin{align}\label{bound02}
\| f \ast K_{\alpha,s}^{(2)}\|_{L^\infty} 
\leq \|f\|_{L^{\tilde p}(\lambda)}
   \|\widecheck{K}_{\alpha,s}^{(2)}\|_{L^{\tilde p'}\!(\lambda)} \leq C
 \left(\frac{\tilde q}{d\tilde p'}\right)^{1/\tilde p'} (s^{-d \tilde{p}'/\tilde q}-1)^{1/\tilde p'}\|f\|_{L^{\tilde p}(\lambda)} .
\end{align}
For  $t>0$ we now set 
\[
s(t) = \left[  1+ \frac{d\tilde p'}{\tilde q} \left( \frac{t}{2} \right)^{\tilde p'}\right]^{{-\frac{\tilde q}{d \tilde{p}'}}},
\]
and observe that $s(t)\leq 1$ for every $t>0$. By~\eqref{bound02},
\begin{equation}\label{bound2}
\| f \ast K_{\alpha,s(t)}^{(2)}\|_{L^\infty} \leq C  \frac{t}{2} \|f\|_{L^{\tilde p}(\lambda)} \qquad \forall t>0\,.
\end{equation}
Thus, with $C$ the same constant as in~\eqref{bound1} and~\eqref{bound02},
\begin{align*}
\sup_{t>0} t\, \lambda\! &\left(\left\{ x\colon \:  |f* K_\alpha(x)|>t\right\} \right)^{1/\tilde q}\\
& = C \|f\|_{L^{\tilde p}(\lambda)} \: \sup_{t>0} t\, \lambda\!\left(\left\{ x\colon \:  |f\ast K_\alpha(x)|>Ct \|f\|_{L^{\tilde p}(\lambda)}\right\} \right)^{1/\tilde q} \\
& \leq C\|f\|_{L^{\tilde p}(\lambda)}  \: \sup_{t>0} t\, \lambda\!\left(\left\{ x\colon \:  |f\ast K_{\alpha,s(t)}^{(1)}(x)|>C\frac{t}{2} \|f\|_{L^{\tilde p}(\lambda)}\right\} \right)^{1/\tilde q} \\
& \qquad \qquad +C\|f\|_{L^{\tilde p}(\lambda)} \:\sup_{t>0} t\, \lambda\!\left(\left\{ x\colon \:  |f\ast K_{\alpha,s(t)}^{(2)}(x)|>C\frac{t}{2} \|f\|_{L^{\tilde p}(\lambda)}\right\} \right)^{1/\tilde q}\\
& = C \|f\|_{L^{\tilde p}(\lambda)}\: \sup_{t>0} t\, \lambda\!\left(\left\{ x\colon \:  |f\ast K_{\alpha,s(t)}^{(1)}(x)|>C\frac{t}{2} \|f\|_{L^{\tilde p}(\lambda)}\right\} \right)^{1/\tilde q},
\end{align*}
since $s(t)$ was chosen so that the second super-level set was empty. By~\eqref{bound1}, we get
\begin{align*}
\sup_{t>0} t\, \lambda\! & \left(\left\{ x\colon \:  |f\ast K_{\alpha,s(t)}^{(1)}(x)|>C\frac{t}{2} \|f\|_{L^{\tilde p}(\lambda)}\right\} \right)^{1/\tilde q}\\
&\leq \sup_{t>0} t\, \left[ \left( \frac{2}{ {C} t \|f\|_{L^{\tilde p}(\lambda)}} \right)^{\tilde p} \| f\ast K_{\alpha,s(t)}^{(1)}\|_{L^{\tilde p}(\lambda)}^{\tilde p} \right]^{1/\tilde q}\\
& \leq \sup_{t>0} t \left( \frac{ {C} t\|f\|_{L^{\tilde p}(\lambda)} }{2} \right)^{-\tilde p/\tilde q} \left( \frac{s(t)^\alpha}{\alpha}\right)^{\tilde p/\tilde q}  {C^{\tilde p/\tilde{q}}}\|f\|_{L^{\tilde p}(\lambda)}^{\tilde p/\tilde q}\\
 & =  \left( \frac{2}{\alpha} \right)^{\tilde p/ \tilde q}
 \: \sup_{t>0} t^{1-\tilde p/\tilde q} \left[1 + \frac{d \tilde p'}{\tilde q}
   \left( \frac{t}{2}\right)^{\tilde p'} \right]^{-\frac{1}{\tilde p'}(1- \frac{\tilde p}{\tilde q})} \\
& = \frac{2}{\alpha^{\tilde p/\tilde q}} 
\left( \frac{\tilde q}{d\tilde p'} \right)^{\frac{1}{\tilde p'}(1- \frac{\tilde p}{\tilde q})} 
 \,\sup_{u>0} u^{1-\tilde p/\tilde q} (1 + u^{\tilde p'})^{-\frac{1}{\tilde p'}(1- \frac{\tilde p}{\tilde q})}. 
\end{align*}
It is now easy to see that, for every $\tilde p$ and $\tilde q$,
\[
\sup_{u>0} u^{1-\tilde p/\tilde q} (1 + u^{\tilde
  p'})^{-\frac{1}{\tilde p'}(1- \frac{\tilde p}{\tilde q})}
=\sup_{v>0} \big[ v/(1+v)\big]
^{\frac{1}{\tilde p'}(1- \frac{\tilde p}{\tilde q})} =1.
\]
Moreover,  by~\eqref{tildepqprop} we end up with the inequality
\begin{align}
  \|f\ast K_\alpha\|_{L^{\tilde q,\infty}(\lambda)}
&   = \sup_{t>0} t\, \lambda\! \left(
    \left\{ x\colon \: |f\ast K_\alpha(x)|>t\right\}
 \right)^{\frac{1}{\tilde q}} \notag \\
 & \leq C \alpha^{\tilde p \alpha/d -1} \left( \frac{\tilde q}{d\tilde p'}
  \right)^{(\tilde p -1)\alpha/d} 
  \|f\|_{L^{\tilde p}(\lambda)}. \label{debtildeptildeq}
\end{align}
In other words, the operator defined by $\mathcal{K}_\alpha f = f* K_\alpha$ is
of weak type $(\tilde{p}, \tilde q)$ for every $\tilde p,  \tilde q$
such that $\frac{1}{\tilde q}= \frac{1}{\tilde p}-\frac{\alpha}{d}$,
$1<  \tilde p < \tilde q <\infty$, $0< \alpha< d$.  

In a similar way we can also prove that $\mathcal{K}_\alpha$ is
of weak type $(1, \tilde q)$ for $\frac{1}{\tilde q}= 1-\frac{\alpha}{d}$ and $0< \alpha< d$. Indeed, the estimate \eqref{bound1} holds also for $\tilde p=1$ and 
\begin{equation}\label{K2bis}
\| f \ast K_{\alpha,s}^{(2)}\|_{L^\infty} \leq C \|f\|_{L^{1}(\lambda)} \times
\begin{cases}
s^{\alpha-d}& \mbox{if } s<1 \\
0 &   \mbox{if } s\geq 1.
\end{cases}
\end{equation}
We now set 
\[
s(t) = \begin{cases}
\left(1+   \frac{t}{2} \ \right)^{1/(\alpha-d)}&t\geq 2\\
1&0<t<2\,,
\end{cases}
\]
which is $\leq 1$. Then \eqref{bound2} holds also in this case and we obtain as above that
\begin{align*}
\sup_{t>0} t\, \lambda\! &\left(\left\{ x\colon \:  |f* K_\alpha(x)|>t\right\} \right)^{1/\tilde q}\\
 & \leq  C \|f\|_{L^{1}(\lambda)}\: \sup_{t>0} t\, \lambda\!\left(\left\{ x\colon \:  |f\ast K_{\alpha,s(t)}^{(1)}(x)|>C\frac{t}{2} \|f\|_{L^{\tilde p}(\lambda)}\right\} \right)^{1/\tilde q}\\
& \leq   C \|f\|_{L^{1}(\lambda)}\: \sup_{t>0} t\, \left(  \frac{2}{ {C} t \|f\|_{L^{1}(\lambda)}}   \| f\ast K_{\alpha,s(t)}^{(1)}\|_{L^{1}(\lambda)} \right)^{1/\tilde q}\,.
\end{align*}
We now notice that 
\begin{align*}
  \sup_{0<t<2} t\, \left(  \frac{2}{ {C} t \|f\|_{L^{1}(\lambda)}}   \| f\ast K_{\alpha,s(t)}^{(1)}\|_{L^{1}(\lambda)} \right)^{1/\tilde q}
& \leq  \sup_{0<t<2} t \left( \frac{t\|f\|_{L^{1}(\lambda)} }{2} \right)^{-1/\tilde q} \left( \frac{1}{\alpha}\right)^{1/\tilde q} \|f\|_{L^1(\lambda)}^{1/\tilde q}\\
&= 2\alpha^{-1/{\tilde q}}\,,
 \end{align*}
while
\begin{align*}
\sup_{t\geq 2} t\, \left(  \frac{2}{ {C} t \|f\|_{L^{1}(\lambda)}}   \| f\ast K_{\alpha,s(t)}^{(1)}\|_{L^{1}(\lambda)} \right)^{1/\tilde q}& \leq  \sup_{t\geq 2} t \left( \frac{t\|f\|_{L^{1}(\lambda)} }{2} \right)^{-1/{\tilde q}} \left( \frac{s(t)^{\alpha}}{\alpha}\right)^{1/\tilde q} \|f\|_{L^1(\lambda)}^{1/\tilde q}\\
&\leq C\sup_{t\geq 2} t^{1-\frac{1}{\tilde q}}\left(  \frac{2}{\alpha} \right)^{1/{\tilde q}}\left(\frac{t}{2}\right)^{-1/d}=C\,\alpha^{-1/{\tilde q}}\,.
\end{align*}
This proves that 
\begin{equation}\label{deb1tildeq}
\|f\ast K_\alpha\|_{L^{\tilde q,\infty}(\lambda)}   \leq C \alpha^{-1/\tilde q}\|f\|_{L^{1}(\lambda)}.
\end{equation}
We shall now use the Marcinkiewicz interpolation theorem
 for two
specific choices of the couple $(\tilde{p}, \tilde q)$. Being $p\in
(1,\infty)$, $q \in (p, \infty)$, and $\alpha/d=1/p-1/q$ as in the
statement, we define 
\begin{equation}\label{p1q1p2q2}
\left
  (\frac{1}{p_{1}},\frac{1}{q_{1}}\right)=\left(1,1-\frac{\alpha}{d}\right),
\qquad \left(\frac{1}{p_{2}},
  \frac{1}{q_{2}}\right)=\left(\frac{\alpha}{d}+\frac{1}{q+1},\frac{1}{q+1}\right). 
\end{equation}
By the above, $\mathcal{K}_{\alpha}$ is both of weak type 
$(1,q_{1})$ and $(p_{2},q_{2})$  with norms $M(1,q_1)$ and
$M(p_2,q_2)$ respectively, given by
\begin{align*}
 M(1,q_1) & = \alpha ^{-(1-\alpha/d)} , \label{M1} \\
 M(p_2,q_2)  & =\Big(\frac{d^{\alpha/d}}{\alpha}\Big)
  \left( \frac{\alpha}{d}\right)^{\frac{\alpha/d}{\alpha/d+1/(q+1)}}
\Big[ \Big(1-\frac{\alpha}{d} -\frac{1}{q+1}
   \Big)(q+1)\Big]^{\frac{1}{1+d/(\alpha(q+1))}- \frac{\alpha}{d}}.
  \end{align*}
We select
\begin{equation*}\label{teta}
\theta= \frac{1-\frac{1}{p}}{1-\frac{\alpha}{d}
  -\frac{1}{q+1} } .
\end{equation*}
Notice that we indeed
have $0<\theta<1$, $1/p=(1-\theta)/p_{1}+\theta/p_{2}$ and
  $1/q=(1-\theta)/q_{1}+\theta/q_{2}$. Thus, 
$\mathcal{K}_\alpha$ is of
strong type $(p,q)$, i.e.\ bounded from $L^p(\lambda)$ to
$L^q(\lambda)$, with norm bounded by
\begin{equation*}
  C  M_0(1, q_1, p_2, q_2)^{1/q} 
M(1,q_1)^{1-\theta} M(p_2,q_2)^{\theta},
\end{equation*}
see e.g.~\cite[Ch.\ XII, (4.18)]{Z}, where
\begin{align*}
M_0(1, q_1, p_2, q_2) = \frac{q(p_2/p)^{q_2/p_2}}{q_2-q} + \frac{q/p^{q_1}}{q-q_1}  \label{M0}
   . \end{align*}
If we observe that
\begin{equation}\label{Ipq}
M_0(1, q_1, p_2, q_2)^{1/q} 
M(1,q_1)^{1-\theta} M(p_2,q_2)^{\theta} \leq C\, \frac{d-\alpha}{\alpha} \,  \frac{q^{1/p'}}{{p-1}},
\end{equation}
then we get~\eqref{claim}, which concludes the proof of the theorem.

We now prove~\eqref{Ipq}. First we consider $M_1=M(1,q_1)$, and simply observe that
\[
M_1  = \alpha^{-1} d^{\alpha/d}(\alpha/d)^{\alpha/d} \leq  d \, \alpha^{-1}
\]
as $\alpha/d\leq 1$ and $x^x \leq 1$ for $x\in (0,1]$.

Then we consider $M_0=M_0(1, q_1, p_2, q_2)$, and define
\[
C(p,q) = p^{-p'q/(q+p')} \Big(1+\frac{p'}{q}\Big), \qquad y = \frac{\alpha}{d}(q+1).
\]
Since
\[
\frac{p_2}{q_2}= 1+y, \qquad \frac{p_2}{p} = y+1+\frac{1}{q},
\]
we get
\[
M_0 =  q   \left(y+1+\frac{1}{q}\right)^{1+y} (1+y)^{-(1+y)} + C(p,q). 
\]
Moreover
\begin{align*}
 \left(y+1+\frac{1}{q}\right)^{1+y} (1+y)^{-(1+y)}= \left[ \left( 1+ \frac{1}{q(1+y)}\right)^{q(1+y)}\right]^{1/q}  \leq \e
\end{align*}
since $q(1+y) \geq 1$ and by the estimate $ (1 +\frac{1}{x})^x \leq \e$ for $x\geq 1$.
 Thus  $M_0 \leq   \e \, q + C(p,q)$. 
 
 We then consider $M_2=M(p_2,q_2)$, and estimate $M_2^\theta$. We first observe that
\[
M_2^\theta \leq d^\theta \alpha^{-\theta} \left( \frac{\alpha}{d}\right)^{\theta \frac{\alpha/d}{\alpha/d+1/(q+1)}} \Big[ \Big(1-\frac{\alpha}{d} -\frac{1}{q+1}  \Big)(q+1)\Big]^{\theta\frac{\alpha/d}{\alpha/d+1/(q+1)}- \theta\frac{\alpha}{d}} 
\]
and that
\begin{multline}
\left( \frac{\alpha}{d}\right)^{\theta \frac{\alpha/d}{\alpha/d+1/(q+1)}} \Big[ \Big(1-\frac{\alpha}{d} -\frac{1}{q+1}  \Big)(q+1)\Big]^{\theta\frac{\alpha/d}{\alpha/d+1/(q+1)}- \theta\frac{\alpha}{d}} \\ = \left[ \left(\frac{\alpha}{d}\right)^{\frac{1}{1-z} }(q+1)\right]^{ (1-1/p) \frac{\alpha/d}{z} } (1-z )^{ (1-1/p) \frac{\alpha/d}{z} } \label{MM2}
\end{multline}
where $z= \frac{\alpha}{d} + \frac{1}{q+1}$. Observe that $0< z < 1/p<1 $, and that
\begin{equation}\label{alphadqz}
\frac{\alpha/d}{\alpha/d + 1/(q+1)} = \frac{\alpha}{dz} = \frac{(q-p)(q+1)}{q(q+1)-p} \leq 1.
\end{equation}
Therefore
\[
\left(\frac{\alpha}{d}\right)^{\frac{1}{1-z} } \leq \frac{\alpha}{d}, \qquad (1-z )^{ (1-1/p) \frac{\alpha/d}{z} } \leq 1.
\]
Observe now that 
\begin{align*}
\left[ \left(\frac{\alpha}{d}\right) (q+1)\right]^{ (1-1/p) \frac{\alpha/d}{z} } & = \left[ \frac{(q-p)(q+1)}{q(q+1)-p} \right]^{\frac{1}{p'} \frac{(q-p)(q+1)}{q(q+1)-p} }\left[ \frac{q(q+1)-p}{pq}\right]^{\frac{1}{p'} \frac{(q-p)(q+1)}{q(q+1)-p} },
\end{align*}
and that, by~\eqref{alphadqz} and since
\[
2 \frac{q}{p} \geq \frac{q(q+1)-p}{pq} \geq \frac{q}{p} \geq 1,
\]
one gets
\begin{align*}
\left[ \left(\frac{\alpha}{d}\right) (q+1)\right]^{ (1-1/p) \frac{\alpha/d}{z} } \leq 2  \left( \frac{q}{p}\right)^{1/p'}.
\end{align*}
This proves that $M_2^\theta \leq 2 \, d^\theta \, (q/p)^{1-1/p} \alpha^{-\theta}$.

Putting everything together, we proved that
\[
M_0^{1/q} M_1^{1-\theta} M_2^{\theta} \leq   2\,d \, \alpha^{-1}  (\e \, q+C(p,q))^{1/q} (q/p)^{1-1/p}.
\]
It remains to estimate the term in the parenthesis in the right hand side. Observe first that
\[
 (\e q+C(p,q))^{1/q}   \leq (\e\, q)^{1/q} + C(p,q)^{1/q} \leq 2\e + C(p,q)^{1/q},
\]
and then that
\[
 C(p,q)^{1/q} \leq \Big(1+\frac{p'}{q}\Big)^{1/q} =
 \frac{d-\alpha}{d} \, p' \, \Big(1+\frac{p'}{q}\Big)^{1/q-1} \leq  \frac{d-\alpha}{d} \, p' .
\]
After observing that $ (d-\alpha) \, p' /d  \geq 1$, the proof of~\eqref{Ipq} is
complete. This implies~\eqref{claim} and completes the proof.
\end{proof}

\section{Comparison with the Euclidean case}\label{sec:Eucl}

In this section we compare our embedding constant $A_1S(p,q)$ with the known embedding constant in the Euclidean case. As a preliminary remark, observe that if $G$ has polynomial growth, then $\delta=1$, and $\Ls =
\Delta$ is the sum-of-squares sub-Laplacian associated with ${\bf X}$.
Since the exponential dimension $D$ can be taken arbitrarily small,
one obtains $\tau_\delta =1$. Thus, in this case the Sobolev norm $\|
\cdot\|_{L^p_\alpha(\lambda)}$ is the graph norm of $(I
+\Delta)^{\alpha/2}$ in $L^p(\lambda)$. 

This in particular holds in $\R^d$, where ${\bf X} = \{\partial_1,
\dots, \partial_d\}$, $\Delta$ is the Laplacian, $\lambda$ is the
Lebesgue measure and $L^p_\alpha =L^p_\alpha(\lambda)$ is the
classical inhomogeneous Sobolev space. Theorem~\ref{teo:embed} in the Euclidean setting then
reads as 
\[
 \| f\|_{L^q}\leq A_1\,  S(p,q)    \|f\|_{L^p_\alpha },
\]
where $A_1$ depends only on the dimension $d$.

\bigskip

Let $0<\alpha<d$ and $p,q\in(1,\infty)$ be such that $\frac 1q=\frac 1p -\frac\alpha d$. Denote respectively by $E(p,q,d)$ and $E_H(p,q,d)$ the best embedding constants of $L^p_\alpha$ into $L^q$, and of 
$\dot{L}^p_\alpha$ into $L^q$, where $\dot{L}^p_\alpha$ is the homogeneous Sobolev space given by the
closure of the Schwartz functions with respect to the norm $\|f\|_{\dot{L}^p_\alpha} = \|\Delta^{\alpha/2}f\|_{L^p}$. Equivalently, $E(p,q,d)$ and $E_H(p,q,d)$  are respectively  the infimum of the constants $C_I,C_H>0$ such that
$$
\| (I+\Delta)^{-\alpha/2}  f\|_{L^q} \le C_I \|f\|_{L^p} 
\qquad
\text{and}
\qquad
\|  \Delta^{-\alpha/2} f\|_{L^q} \le C_H
\|f\|_{L^p}.
$$
 Now,   $E_H(p,q,d)$  equals 
\begin{equation}\label{ineq:Riesz-Bessel0}
E_H(p,q,d) =
\frac{1}{(2\pi)^\alpha}\frac{\Gamma((d-\alpha)/2)}{\Gamma(\alpha/2)}
{C_L(p,q,d)}
\end{equation}
where {$C_L(p,q,d)$} is the best constant for the
Hardy--Littlewood--Sobolev inequality,  which by~\cite[Theorem 4.3]{LiebLoss} can be estimated as follows:
\begin{equation}\label{stimaLL}
C_L(p,q,d)
 \le  \frac{d}{\alpha} \bigg(\frac{\omega_{d-1}}{d}\bigg)^{1-\frac{\alpha}{d}} 
\bigg(1-\frac{\alpha}{d} \bigg)^{1-\frac{\alpha}{d}}
\frac{1}{pq'} \bigg( {p'}^{\frac{1}{p'}+\frac{1}{q}} +q^{\frac{1}{p'}+\frac{1}{q}}\bigg),
\end{equation}
where $\omega_{d-1}$ is the surface measure of the unit sphere in  $\bbR^d$. Notice that $\frac{1}{p'}+ \frac{1}{q} = 1- \frac{\alpha}{d}$.  
In other words, by~\eqref{ineq:Riesz-Bessel0} and~\eqref{stimaLL} the best known bound for $E_H(p,q,d) $ is given by $E_H(p,q,d)\leq \widetilde E_H(p,q,d) $, where
\[
\widetilde E_H(p,q,d) =
\frac{1}{(2\pi)^\alpha}\frac{\Gamma((d-\alpha)/2)}{\Gamma(\alpha/2)}
\frac{d}{\alpha} \bigg(\frac{\omega_{d-1}}{d}\bigg)^{1-\frac{\alpha}{d}} 
\bigg(1-\frac{\alpha}{d} \bigg)^{1-\frac{\alpha}{d}}
\frac{1}{pq'} \bigg( {p'}^{\frac{1}{p'}+\frac{1}{q}} +q^{\frac{1}{p'}+\frac{1}{q}}\bigg).
\]
It turns out that $E(p,q,d)=E_H(p,q,d)$, so that
the best known bound for $E(p,q,d)$ is given in terms of $\widetilde
E_H(p,q,d)$; more precisely, we have the following result. For $p,q\in(1,\infty)$, $q\geq p$, set
\begin{equation}\label{Fpq}
F(p,q):= \frac{1}{\frac{1}{p'}+\frac{1}{q}} \frac{1}{pq'} \Big({p'}^\frac1q +q^{\frac{1}{p'}} \Big) .
\end{equation}
\begin{proposition}\label{prop:Lieb}
For all $p\in(1,\infty)$, $\alpha\in [0,d/p)$ and $q\in [p,\infty)$ such that $ \frac{1}{q}=\frac{1}{p}-
\frac{\alpha}{d}$,
\begin{equation}\label{ineq:Epq}
E(p,q,d) = E_H(p,q,d) \le \widetilde E_H(p,q,d).
\end{equation}
Moreover,  there exists a positive constant $B_1$ depending only on $d$ such  that,  for all $p,q,\alpha$ as above,
\begin{equation}\label{ineq:S-SH}
B_1^{-1} F(p,q) \le \widetilde E_H(p,q,d)\le B_1 F(p,q). 
\end{equation}
\end{proposition}
\begin{proof}
The equality in~\eqref{ineq:Epq} follows by observing that the best constant in the inequality $\|f\|_q \leq C \|(a + \Delta)^{\alpha/2}f\|_p$, with $f$ Schwartz, does not depend on $a> 0$ by rescaling; and then by using a limit argument (we thank one of the anonymous referees for pointing this out to us). The inequality in~\eqref{ineq:Epq} follows instead from the discussion preceding the proposition.

We now prove~\eqref{ineq:S-SH}. Using the conditions $0<\alpha<d$ and
$\frac1q=\frac1p-\frac{\alpha}{d}$, we have
\begin{align*}
  \frac{1}{(2\pi)^\alpha}\frac{\Gamma((d-\alpha)/2)}{\Gamma(\alpha/2)}
\frac{d}{\alpha}  &\bigg(\frac{\omega_{d-1}}{d}\bigg)^{1-\frac{\alpha}{d}} 
\bigg(1-\frac{\alpha}{d} \bigg)^{1-\frac{\alpha}{d}} \\
  & = \frac{1}{(2\pi)^\alpha}
   \frac{\Gamma(1+(d-\alpha)/2)}{\Gamma(1+\alpha/2)}
    \frac{1}{1-\frac{\alpha}{d}}
\bigg(\frac{\omega_{d-1}}{d}\bigg)^{1-\frac{\alpha}{d}} 
\bigg(1-\frac{\alpha}{d} \bigg)^{1-\frac{\alpha}{d}}
  \\
  & = \frac{1}{(2\pi)^\alpha}
   \frac{\Gamma(1+(d-\alpha)/2)}{\Gamma(1+\alpha/2)}
\bigg(\frac{\omega_{d-1}}{d}\bigg)^{1-\frac{\alpha}{d}} 
\bigg(1-\frac{\alpha}{d} \bigg)^{1-\frac{\alpha}{d}} \frac{1}{\frac{1}{p'}+\frac1q},
\end{align*}
and 
$$
\frac{1}{B(d)} \le \frac{1}{(2\pi)^\alpha}
   \frac{\Gamma(1+(d-\alpha)/2)}{\Gamma(1+\alpha/2)}
\bigg(\frac{\omega_{d-1}}{d}\bigg)^{1-\frac{\alpha}{d}} 
\bigg(1-\frac{\alpha}{d} \bigg)^{1-\frac{\alpha}{d}} \le B(d),
$$
where $B(d)$ is a constant depending only on $d$; observe indeed that each factor in the product above is bounded from above and below by a constant that depends only on $d$. Hence, 
\begin{align*}
 \frac{1}{B(d)} \frac{1}{\frac{1}{p'}+\frac{1}{q}} \frac{1}{pq'} \Big( {p'}^{\frac1q}+q^{\frac1{p'}} \Big) \le  \widetilde E_H(p,q,d) & \le  B(d)e^{1/e}
\frac{1}{\frac{1}{p'}+\frac{1}{q}} \frac{1}{pq'} \Big({p'}^{\frac1q}+q^{\frac1{p'}} \Big) ,
\end{align*}
since $1\le x^{1/x}\le e^{1/e}$ when $x\ge1$.
Hence, \eqref{ineq:S-SH} follows.
\end{proof}

We now show that similar estimates hold in our case, namely that the constant $S(p,q)$ is comparable to $\widetilde E_H(p,q,d)$, up to a constant depending only on $d$. In other words, we show that we recover the best known result, in terms of dependence on $p$ and $q$, when $G$ is a Euclidean space.

\begin{theorem}
There exists a constant $B_3$, depending only on $d$, such that
for all $p\in(1,\infty)$, $\alpha\in [0,d/p)$ and $q\in [p,\infty)$ such that $
\frac{1}{q }=\frac{1}{p} - \frac{\alpha}{d}$ 
we have
\begin{equation}\label{ineq:ok}
\frac{1}{B_3}S(p,q) \le    \widetilde E_H(p,q,d)  \le B_3  S(p,q).
\end{equation}
\end{theorem}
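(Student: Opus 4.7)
The strategy is to prove that $S(p,q)$ and $F(p,q)$ are comparable with absolute constants; combined with Proposition~\ref{prop:Lieb} this yields the theorem. As a first step of the plan, I would rewrite $F$ using the identities $(p-1)p'=p$ and $\tfrac{1}{p'}+\tfrac{1}{q}=1-\tfrac{\alpha}{d}$ in the symmetric form
$$F(p,q) = \frac{(q-1)(a+b)}{(p-1)(p'+q)}, \qquad a := p'^{1/q},\quad b := q^{1/p'}.$$
Both $S$ and $F$ are invariant under the involution $(p,q)\mapsto(q',p')$, since $(p-1)(p'-1)=1=(q-1)(q'-1)$. By this symmetry I may restrict to \emph{Case I}: $q \ge p'$ (equivalently $p \ge q'$).

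In Case I, three elementary observations organize the analysis: (a) since $p+p'=p^2/(p-1)\ge 4$, one has $q \ge \max(p,p') \ge 2$, hence $(q-1)/q \ge 1/2$; (b) since $(p-1)q \ge (p-1)p'=p$, the denominator $(p-1)(p'+q) = p+(p-1)q$ is comparable to $(p-1)q$; (c) since $x\mapsto x\log x$ is increasing on $[1,\infty)$ and $p'\le q$, we have $\log a = (\log p')/q \le (\log q)/p' = \log b$, so $a \le b$. Combining (a)--(c) yields $F(p,q) \asymp b/(p-1)$. Since $S(p,q)\le b/(p-1)$ trivially, the matching lower bound reduces to the single inequality
$$\frac{b}{(p-1)(q-1)a}\le 1 \qquad \text{in Case I,} \quad (\star)$$
which would give $b/(p-1)\le (q-1)a$, hence $S(p,q)=b/(p-1) \asymp F(p,q)$.

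The main obstacle is to prove $(\star)$. Using $(p-1)(q-1) = pq/(p'q')$ (a consequence of $(p-1)p'=p$ and $(q-1)q'=q$), the left-hand side rewrites as $p'^{1/q'}q'/(p q^{1/p})$. Fixing $p$ and parameterizing by $s:=1/q \in (0,1/p']$, so that $q'=1/(1-s)$, I would set
$$h(s) := (1-s)\log p' - \log(1-s) + (\log s)/p - \log p,$$
and compute, using the universal identity $1/p+1/p'=1$, that $h(1/p')=0$ and
$$h'(s) = -\log p' + \frac{1}{1-s} + \frac{1}{sp}.$$
The convex function $s \mapsto \tfrac{1}{1-s} + \tfrac{1}{sp}$ attains its global minimum on $(0,1)$ at $s^*=1/(1+\sqrt p)$ with value $(1+1/\sqrt p)^2$, so on $(0,1/p']$ its minimum is either this value or the right-endpoint value $p+1/(p-1)$. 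In either case I would verify that this minimum dominates $\log p'$, using AM-GM to get $p+1/(p-1) \ge 2\sqrt{p'}$ and the elementary inequality $2\sqrt y \ge \log y$ for $y \ge 1$. Hence $h'\ge 0$ on $(0,1/p']$, so $h$ is monotone increasing and $h(s)\le h(1/p')=0$, which is exactly $(\star)$.
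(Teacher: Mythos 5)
Your overall strategy coincides with the paper's: both proofs reduce the theorem, via Proposition~\ref{prop:Lieb}, to showing $S(p,q)\asymp F(p,q)$, exploit the involution $(p,q)\mapsto(q',p')$ to restrict to the regime $q\ge p'$, show that there $F(p,q)$ is comparable to $Q(p,q)=q^{1/p'}/(p-1)$ with absolute constants, and finally reduce everything to the single exact inequality $Q(p,q)\le Q(q',p')$, which is precisely your $(\star)$. The bookkeeping you use (your observations (a)--(c), the rewriting of $F$) is essentially the paper's, so the two proofs only genuinely diverge in the verification of $(\star)$.

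Here the paper is considerably slicker. After clearing denominators and using $q'\le p$, the paper reduces $(\star)$ to ${p'}^p\le q^{q'}$, i.e.\ to $\frac{p'\log p'}{p'-1}\le\frac{q\log q}{q-1}$, which follows at once from $p'\le q$ and the monotonicity of $x\mapsto\frac{x\log x}{x-1}$ on $(1,\infty)$ (checked by a one-line derivative computation, using $x-1\ge\log x$). Your approach instead freezes $p$, parameterizes by $s=1/q$, and shows $h'\ge 0$ by a two-case analysis of the convex function $g(s)=\frac{1}{1-s}+\frac{1}{sp}$. This works, but is noticeably more computational, and it contains a genuine gap. Your AM--GM chain $p+\frac{1}{p-1}\ge 2\sqrt{p'}\ge\log p'$ verifies $g_{\min}\ge\log p'$ \emph{only} in the endpoint case $s^*\ge 1/p'$. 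In the interior case $s^*<1/p'$ the relevant minimum is $g(s^*)=(1+1/\sqrt{p})^2$, and this is \emph{not} $\ge 2\sqrt{p'}$ in general (for large $p$ it tends to $1<2$), so the stated chain of inequalities does not apply. The gap is fillable -- the condition $s^*<1/p'$ is equivalent to $(p-1)\sqrt{p}>1$, which forces $p$ to be bounded away from $1$, hence $p'<e$ and $\log p'<1\le(1+1/\sqrt{p})^2$ -- but you need to say this explicitly rather than assert ``in either case''. Given that the paper's monotonicity argument dispatches $(\star)$ in two lines without any case analysis, I would recommend adopting it.
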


\begin{proof}
We are going  to show that
$S(p,q)$ is bounded above and below by absolute constants times
$F(p,q) $, and in view of~\eqref{ineq:S-SH} this will suffice. Since $F(p,q) = F(q',p') $ and $ S(p,q) = S(q',p')$,  it suffices in turn to consider the case $q\ge p'$.

We claim that in this regime
\begin{align*}
  \frac14 Q(p,q)\le F(p,q) \le 4
  Q(p,q),
\end{align*}
where $Q(p,q)=\frac{q^{1/p'}}{p-1}$.
Since $q\ge p'$, we also have $\frac{1}{p'}  \ge \frac1q$ and ${p'}^{\frac1q}\le q^{\frac{1}{p'}}$
(since $x\mapsto x^x$ is increasing on $[1,\infty)$).
Then, since as before $1\le q'\le2$,  
$$
F(p,q) \le 2 \frac{1}{\frac{1}{p'}pq'} q^{\frac{1}{p'}}=
\frac{2}{q'(p-1)}q^{\frac{1}{p'}}\le 2\frac{q^{\frac{1}{p'}}}{p-1}
= 2Q(p,q).
$$
On the other hand,
  $$
 F(p,q) \ge \frac{1}{\frac{2}{p'}q'p}q^{\frac{1}{p'}} =
 \frac{q^{\frac{1}{p'}}}{2q'(p-1)}
 \ge \frac14 Q(p,q).
 $$
 This proves the claim. It remains to show that if $q\geq p'$ then $S(p,q) = Q(p,q)$, namely $ Q(p,q)\le Q(q',p')$. The latter inequality is 
$$
 \frac{q^{\frac{1}{p'}}}{p-1}\le  \frac{{p'}^{\frac{1}{q}}}{q'-1}.
$$
Multiplying both
sides by $pq'$, it becomes
$$
q'p'^{\frac{1}{q'}} \le p q^{\frac1p}  .
$$
Since $q\ge p'$, hence $q'\le p$, it suffices to show that
$p'^{\frac{1}{q'}} \le  q^{\frac1p}$, that is, ${p'}^p \le
q^{q'}$. But this follows since $p'\le q$ and the
function $x\mapsto e^{\frac{x}{x-1}\log x}$ is increasing in $[1,\infty)$. This concludes the proof.
\end{proof}

\section{A Moser--Trudinger inequality}\label{Sec_MT}
As an application of Theorem~\ref{teo:embed}, we shall prove a quantitative Moser--Trudinger inequality.  To do this, we will need a precise version of the interpolation inequality~\cite[eq.\ (6.1)]{BPV1} associated to the interpolation space  $(L^p(\lambda),L^p_{\alpha}(\lambda))_{[\theta]}=L^p_{\theta\alpha}(\lambda)$ with respect to the complex method. To prove this refined estimate, we follow some ideas developed in~\cite{AM}; see also~\cite{PV}.
\begin{proposition}\label{teo:interpolation}
  Let $p\in(1,\infty)$ and define
  \begin{equation*}
\mathcal C_p= \inf_{\sigma>0}   \sup_{t\in\bbR} \e^{\sigma(1- t^2)}  
    \|  (\tau_\delta I+\mathcal L)^{it}  \|_{L^p(\lambda)\rightarrow L^p(\lambda) } .
    \end{equation*}
    Then $1\leq \mathcal C_p<\infty$ and for all $f \in L^p_\alpha(\lambda)$, $\alpha\geq 0$, and $\theta\in(0,1)$ we have 
\begin{equation}\label{f:interpolationinequality}
\|f\|_{L^p_{\theta\alpha}(\lambda)}\leq  \mathcal C_p
\|f\|_{L^p(\lambda)}^{1-\theta} \,
\|f\|_{L^p_{\alpha}(\lambda)}^{\theta}\,.
\end{equation} 
\end{proposition}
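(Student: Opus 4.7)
The plan is to establish the two assertions in turn: first the bounds $1\leq \mathcal C_p<\infty$, and then the interpolation inequality by Stein-type complex interpolation in a vertical strip.

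For the finiteness of $\mathcal C_p$, I would invoke the contractivity of the diffusion semigroup $\e^{-t\mathcal L}$ on $L^p(\lambda)$ for every $p\in[1,\infty]$ (recalled in Section~\ref{s: setting}): the shifted operator $\tau_\delta I+\mathcal L$ likewise generates a contraction semigroup $\e^{-\tau_\delta t}\e^{-t\mathcal L}$, and Cowling's theorem on imaginary powers of generators of symmetric submarkovian semigroups gives a bound $\|(\tau_\delta I+\mathcal L)^{it}\|_{L^p\to L^p}\leq C_p\,\e^{c_p|t|}$. The Gaussian factor in $\e^{\sigma(1-t^2)}$ dominates this exponential growth for any $\sigma>0$, so $\mathcal C_p<\infty$. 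The lower bound $\mathcal C_p\geq 1$ comes from evaluating at $t=0$, where $(\tau_\delta I+\mathcal L)^0=I$ has operator norm $1$, giving $\sup_t\geq \e^\sigma$ for every $\sigma>0$ and hence $\mathcal C_p\geq\inf_{\sigma>0}\e^\sigma=1$.

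For the interpolation inequality, the cases $\alpha=0$ or $f\equiv 0$ are trivial, so I set $A:=\|f\|_{L^p(\lambda)}>0$, $B:=\|f\|_{L^p_\alpha(\lambda)}>0$ and use duality to reduce to estimating $|\langle(\tau_\delta I+\mathcal L)^{\theta\alpha/2}f,g\rangle|$ for $g$ in the unit ball of $L^{p'}(\lambda)$. Following the analytic-family technique of~\cite{AM,PV}, for each $\sigma>0$ I would introduce the scalar-valued analytic function on the strip $S=\{0\leq \Re z\leq 1\}$
\begin{equation*}
\Phi(z):=(A/B)^{z}\,\e^{\sigma z(z-1)}\,\langle(\tau_\delta I+\mathcal L)^{z\alpha/2}f,\, g\rangle,
\end{equation*}
where the weight $\e^{\sigma z(z-1)}$ is chosen so that its modulus equals $\e^{-\sigma y^2}$ on both vertical boundaries of $S$, compensating for the exponential growth of $M(t):=\|(\tau_\delta I+\mathcal L)^{it}\|_{L^p\to L^p}$. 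The boundary estimates $|\Phi(iy)|,|\Phi(1+iy)|\leq A\,M(y\alpha/2)\,\e^{-\sigma y^2}$, combined with $|\Phi(\theta)|=(A/B)^\theta\e^{-\sigma\theta(1-\theta)}|\langle(\tau_\delta I+\mathcal L)^{\theta\alpha/2}f,g\rangle|$ and the Hadamard three-lines lemma, would then yield, after taking the supremum over $g$,
\begin{equation*}
\|f\|_{L^p_{\theta\alpha}(\lambda)}\leq A^{1-\theta}B^\theta\,\e^{\sigma\theta(1-\theta)}\sup_{y\in\bbR}M(y\alpha/2)\,\e^{-\sigma y^2}.
\end{equation*}
A change of variables $t=y\alpha/2$ together with an appropriate reparametrisation of $\sigma$ rewrites the rightmost factor in the form appearing in the definition of $\mathcal C_p$, and taking the infimum over $\sigma>0$ concludes.

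The main technical point is the verification that $\Phi$ has the admissible (sub-exponential) growth required for the three-lines lemma, which reduces to the Cowling bound on the imaginary powers; once this is secured, the remaining work is the careful algebraic matching of the Gaussian factors with the infimum definition of $\mathcal C_p$. The shift $\tau_\delta\geq 1$ keeps the spectrum of $\tau_\delta I+\mathcal L$ bounded away from zero, which guarantees that the holomorphic functional calculus produces an $L^p$-valued analytic family in $z$ and, in particular, that $\Phi$ is well defined on $\overline S$.
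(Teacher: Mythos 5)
Your proposal is essentially the paper's proof: the same Stein-type complex interpolation on a vertical strip with a Gaussian weight, the same appeal to Cowling's theorem for the boundedness of the imaginary powers $(\tau_\delta I+\mathcal L)^{it}$ (giving $\mathcal C_p<\infty$), and the same evaluation at $t=0$ for $\mathcal C_p\ge 1$. The differences are cosmetic: the paper works with the \emph{negative} powers $(\tau_\delta I+\mathcal L)^{-\alpha z/2}$ acting on simple functions $f$ (so the analytic family is automatically bounded on the closed strip, with no domain issues) and then substitutes $g=(\tau_\delta I+\mathcal L)^{-\alpha/2}f$ and uses density at the end; it uses the weight $\e^{\sigma z^2}$ instead of your $\e^{\sigma z(z-1)}$; and it obtains the geometric mean $\|f\|^{1-\theta}\|f\|_{L^p_\alpha}^\theta$ from the three-lines theorem directly rather than by inserting the normalising factor $(A/B)^z$. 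None of these affect the substance.

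The one step you should nail down is the ``appropriate reparametrisation of $\sigma$.'' After the change of variable $t=y\alpha/2$ and setting $\mu=4\sigma/\alpha^2$, the quantity $\sup_y\e^{-\sigma y^2}M(y\alpha/2)$ equals $\e^{-\mu}\mathcal C_{p,\mu}$, while the outer factor $\e^{\sigma\theta(1-\theta)}$ becomes $\e^{\mu\alpha^2\theta(1-\theta)/4}$. Your final bound is therefore $A^{1-\theta}B^\theta\,\inf_{\mu>0}\e^{\mu(\alpha^2\theta(1-\theta)/4-1)}\mathcal C_{p,\mu}$, and the exponential prefactor collapses to something $\le1$ only when $\alpha^2\theta(1-\theta)\le 4$; outside that range the infimum is not manifestly bounded by $\mathcal C_p$ and this needs an additional argument. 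The published proof contains the same unstated rescaling (it writes $\mathcal C_{p,\sigma}$ in the boundary estimates without accounting for the factor $\alpha/2$ inside the imaginary power), so you are not worse off than the source, but you should be aware that this is the point at which the argument, as written on either side, is not complete for all $\alpha$.
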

  
\begin{proof} For $\sigma>0$, let
\[
\mathcal C_{p,\sigma} =   \sup_{t\in\bbR} \e^{\sigma(1- t^2)}  
    \|  (\tau_\delta I+\mathcal L)^{it}  \|_{L^p(\lambda)\rightarrow L^p(\lambda) }.
\]
Since $\mathcal C_{p,\sigma}$ is finite for all $\sigma>0$ by~\cite[Corollary 1]{Cowling}, see 
also~\cite{Meda}, it follows that $\mathcal C_{p}$ is finite. Moreover, since $(\tau_\delta I+\mathcal L)^{it} = I $ for $t=0$, one gets $\mathcal C_{p,\sigma} \geq \e^{\sigma} \geq 1$, hence also $\mathcal C_{p}\geq 1$.

  Suppose that $f=\sum_{j=1}^Na_j\chi_{E_j}$,
$h=\sum_{k=1}^{N'}a_k'\chi_{E'_k}$  are two simple functions on
$G$.  Let $S= \big\{ z\in\bbC:\, 0<\Re z<1\big\}$, and let $\ov S$
denote its closure. For every $z\in \ov S$ we define 
$$
w(z)=\e^{\sigma z^2}\int_G(\tau_\delta I+\mathcal L)^{-\alpha z/2}f(x)h(x)\, \dd\lambda(x).
$$
Then $w$ is holomorphic on $S$, continuous on $\ov S$ and $w$ is
bounded on $\ov S$. Indeed,  
$$
\begin{aligned}
\sup_{z\in\ov{S}} |w(z)|&\leq \sum_{j=1}^N\sum_{k=1}^{N'}|a_j||a_k'|
\sup_{z\in\ov{S}}   \Big| \e^{\sigma z^2}\int_{E_k'}(\tau_\delta I+\mathcal
L)^{-\alpha z/2}\chi_{E_j}(x)\, \dd\lambda(x) \Big|\\ 
&\leq \mathcal C_{p,\sigma} \sum_{j=1}^N\sum_{k=1}^{N'}|a_j||a_{k}'|\lambda
(E_{k}')^{1/p'} \sup_{0\leq x\leq 1}\|(\tau_\delta I+\mathcal
L)^{-\alpha x/2}\|_{L^p(\lambda)\rightarrow
  L^p(\lambda)}\lambda(E_j)^{1/p} <\infty\,.
\end{aligned}
$$
We now observe that for every $t\in\mathbb R$
\begin{equation*}
|w(it)|\leq \mathcal C_{p,\sigma} \|f\|_{L^p(\lambda)}\|h\|_{L^{p'}(\lambda)}
\end{equation*}
and
\begin{equation*}
|w(1+it)|\leq  \mathcal C_{p,\sigma} \|(\tau_{\delta}I+\mathcal L)^{-\alpha/2}f\|_{L^p(\lambda)}\|h\|_{L^{p'}(\lambda)}\,.
\end{equation*}
 By the classical three lines theorem  it follows that
$$
|w(1-\theta)|\leq \mathcal C_{p,\sigma}   \|f\|^{\theta }_{L^p(\lambda)}
\|(\tau_{\delta}I+\mathcal L)^{-\alpha/2}f\|^{1-\theta
}_{L^p(\lambda)}\|h\|_{L^{p'}(\lambda)}\,. 
$$
By taking the supremum over all simple functions $h$ such that
$\|h\|_{L^{p'}(\lambda)}\leq 1$ we have 
$$
\|(\tau_\delta I+\mathcal L)^{- (1-\theta)\alpha/2}f
\|_{L^p(\lambda)}\leq \mathcal C_{p,\sigma}   \|f\|^{\theta }_{L^p(\lambda)}
\|(\tau_{\delta}I+\mathcal L)^{-\alpha/2}f\|^{1-\theta
}_{L^p(\lambda)}.  
$$
By using the density of simple functions in $L^p(\lambda)$ and
choosing $g=(\tau_{\delta}I+\mathcal L)^{-\alpha/2}f$ we get 
$$
\|(\tau_\delta I+\mathcal L)^{\theta \alpha/2}g \|_{L^p(\lambda)}\leq
\mathcal C_{p,\sigma}  \| (\tau_{\delta}I+\mathcal L)^{\alpha/2} g\|^{\theta
}_{L^p(\lambda)} \|g\|^{1-\theta }_{L^p(\lambda)},  
$$
which is equivalent to
$$
\|g\|_{L^p_{\theta\alpha}(\lambda)}\leq \mathcal \mathcal C_{p,\sigma}   \|   g\|^{\theta
}_{L^p_{\alpha}(\lambda)} \|g\|^{1-\theta }_{L^p(\lambda)}\,.
$$
By taking the infimum over all $\sigma>0$, the inequality \eqref{f:interpolationinequality} follows. 
 \end{proof}

As a corollary of the estimate of Theorem~\ref{teo:embed} and Proposition~\ref{teo:interpolation}, we
obtain the following global Moser--Trudinger inequality. 
Keeping the notation therein, we define
$$\gamma_1
=[\e\,   \big(\mathcal C_pA_1(p'-1\big)   )^{p'}p']^{-1}.
$$

\begin{theorem}\label{teo:GMT2}
Let $p\in (1,\infty)$. For $\gamma \in
[0,\gamma_1)$ and $f\in L^p_{d/p}(\lambda)$ with
$\|f\|_{L^p_{d/p}(\lambda)}\leq 1$, 
\begin{equation}\label{eq:GMT2}
\int_G \Big( \exp(\gamma |f|^{p'})-\sum_{0\leq k<p-1}
\frac{\gamma^k}{k!} |f|^{p'k} \Big)\, \dd \lambda \leq C(G,\mathbf{X},p) 
\|f\|_{L^p(\lambda)}^p. 
\end{equation}
\end{theorem}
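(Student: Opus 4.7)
The plan is to expand $\exp(\gamma|f|^{p'})$ into its Taylor series and integrate term by term. The subtracted polynomial $\sum_{0\le k<p-1}\gamma^{k}|f|^{p'k}/k!$ removes precisely the indices $k$ with $p'k<p$, so that the surviving sum is $\sum_{k\ge\lceil p-1\rceil}\gamma^{k}|f|^{p'k}/k!$ and each remaining integral $\|f\|_{L^{p'k}(\lambda)}^{p'k}$ sits in the range $p'k\ge p$ where the quantitative Sobolev embedding of Theorem~\ref{teo:embed} is applicable.

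Concretely, given $q\in[p,\infty)$, I would set $\alpha'=d/p-d/q\in[0,d/p)$, so that $1/q=1/p-\alpha'/d$ and $\alpha'=\theta\cdot d/p$ with $\theta=1-p/q$. Chaining Theorem~\ref{teo:embed} (from $L^p_{\alpha'}$ to $L^q$) with Proposition~\ref{teo:interpolation} (from $L^p_{d/p}$ down to $L^p_{\alpha'}$), using the hypothesis $\|f\|_{L^p_{d/p}(\lambda)}\le 1$ together with the estimate $S(p,q)\le q^{1/p'}/(p-1)=(p'-1)q^{1/p'}$, I arrive at
\[
\|f\|_{L^q(\lambda)}^{q}\le\bigl(A_1\mathcal C_p(p'-1)\bigr)^{q}\,q^{q/p'}\,\|f\|_{L^p(\lambda)}^{p}.
\]
Specialising to $q=p'k$ for each integer $k\ge\lceil p-1\rceil$, so that $q^{q/p'}=(p'k)^{k}=(p')^{k}k^{k}$, and applying the Stirling estimate $k^{k}/k!\le\e^{k}$, each Taylor coefficient of the exponential series satisfies
\[
\frac{\gamma^{k}}{k!}\,\|f\|_{L^{p'k}(\lambda)}^{p'k}\le\bigl(\gamma\,\e\,p'\,(A_1\mathcal C_p(p'-1))^{p'}\bigr)^{k}\|f\|_{L^p(\lambda)}^{p}=\bigl(\gamma/\gamma_1\bigr)^{k}\|f\|_{L^p(\lambda)}^{p}.
\]
Interchanging sum and integral by monotone convergence and summing the resulting geometric series in $k$, which converges by the hypothesis $\gamma<\gamma_1$, would then yield the desired inequality with $C(G,\mathbf X,p)=\sum_{k\ge\lceil p-1\rceil}(\gamma/\gamma_1)^{k}$.

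\textbf{Main obstacle.} The decisive ingredient is the precise $q^{1/p'}$-dependence of $S(p,q)$ supplied by Theorem~\ref{teo:embed}: only this growth rate is sharp enough that $S(p,p'k)^{p'k}\cdot k^{k}/k!$ collapses into a geometric factor in $k$. A coarser bound, in particular the $\sim q$-growth produced by the $1/(q'-1)$ term in the minimum defining $S(p,q)$, would destroy summability for any positive $\gamma$. The only genuinely technical task is therefore to bookkeep the constants accurately so that the geometric ratio matches the stated value $\gamma_1=[\e(\mathcal C_{p}A_{1}(p'-1))^{p'}p']^{-1}$; no analytic tool beyond the already-established Theorem~\ref{teo:embed} and Proposition~\ref{teo:interpolation} is required.
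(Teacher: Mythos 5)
Your proposal is correct and follows essentially the same route as the paper: expand the exponential, apply Theorem~\ref{teo:embed} chained with Proposition~\ref{teo:interpolation} at each exponent $q=p'k$, bound $S(p,p'k)$ by its first alternative $(p'-1)(p'k)^{1/p'}$, and use Stirling to collapse the series to a geometric one with ratio $\gamma/\gamma_1$. The bookkeeping you carry out is precisely the paper's, so there is no gap or divergence to report.
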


 We point out that, even in the case
of the Laplacian in $\R^d$, the best constant $\gamma_1$ for which \eqref{eq:GMT2} holds is not known, other than
in the cases $d/p=1$~\cite{Li-Ruf} and $d/p=2$~\cite{Lam-Lu}.  
  
\begin{proof} 
By Theorem~\ref{teo:embed} and the
interpolation inequality~\eqref{f:interpolationinequality}, when $q>p$  we obtain
\begin{equation}\label{new}
\|f\|_{L^q(\lambda)} \leq  A_1 \,   S(p,q)   \,\mathcal C_p  \|f\|_{L^p_{d/p}(\lambda)}^{1-p/q}\|f\|_{L^p(\lambda)}^{p/q}.
\end{equation}
Then, if $\|f\|_{L^p_{d/p}(\lambda)}\leq 1$,
\begin{align}
\int_G \Big( \exp(\gamma |f|^{p'})-\sum_{0\leq k<p-1} \frac{\gamma^k}{k!} |f|^{p'k} \Big)\, \dd \lambda
&= \sum_{k\geq p-1}  \frac{\gamma^k}{k!} \|f\|_{L^{p'k}(\lambda)}^{p'k}\notag\\
  & \leq \|f\|_{L^p(\lambda)}^p \sum_{k\geq p-1}  \frac{\gamma^k}{k!} (\mathcal C_pA_1)^{p'k} S(p,p'k)^{p'k} \ .\label{new2} 
\end{align}
Observe that since $S(p,q) = Q(p,q)$ when $q\geq p'$ and $p'k \geq p'$, 
\begin{align*}
S(p,p'k)^{p'k}
  & = \min\bigg( \frac{(p'k)^{1/p'}}{p-1}, \frac{{p'}^{1/(p'k)}}{(p'k)'-1}\bigg)^{p'k} =\frac{(p'k)^k}{(p-1)^{p'k}}\,.
\end{align*}
Plugging this estimate into~\eqref{new2} we obtain 
\begin{align*}
\int_G \Big( \exp(\gamma |f|^{p'})-\sum_{0\leq k<p-1} \frac{\gamma^k}{k!} |f|^{p'k} \Big)\, \dd \lambda
& \le \|f\|_{L^p(\lambda)}^p \sum_{k\geq p-1}  \frac{\gamma^k}{k!} 
\big(\mathcal C_pA_1(p'-1) \big)^{p'k}(p'k)^k\\
  & \leq C(G, {\bf{X}},p) \|f\|_{L^p(\lambda)}^p
\end{align*}
if $\gamma<\gamma_1$. The proof of the theorem is complete.
\end{proof}

\section{The case of general measures}\label{Sec_gen}
In this final section we consider the case of more general sub-Laplacians and relatively invariant measures, as in~\cite{BPTV}, where different phenomena appear. We denote by $\rho$ the right Haar measure such that $\dd \lambda = \delta^{-1} \, \dd \rho$, and by $\chi$ a continuous positive
character of $G$. We then let $\mu_\chi$ be the measure with density $\chi$
with respect to $\rho$. As $\delta$ is a continuous positive character, $\mu_\delta = \lambda$. Since
\begin{equation*}
\sup\nolimits_{|x|\leq r} \chi(x) = \e^{\cf(\chi) r}, \quad
\mbox{where} \quad \cf(\chi) = ( |X_1\chi(e)|^2 + \cdots +
|X_\ell\chi(e)|^2)^{1/2},
\end{equation*}
cf.~\cite{HMM},  and $V(r) = \rho(B_r)$, the metric measure space $(G, d_C, \mu_\chi)$ is locally
doubling, though not doubling in general. 

The spaces $L^p(\mu_\chi)$ are defined classically and in the same way as the spaces $L^p(\lambda)$ described above. We denote by $\Delta_{\chi}$ the sub-Laplacian with drift 
\begin{equation*}
\Delta_\chi = - \sum_{j=1}^\ell (X_j^2 +(X_j\chi)(e) X_j),
\end{equation*}
and recall that it is symmetric on $L^2(\mu_\chi)$. Observe that $\Delta_\delta =\Ls$ and $\Delta_1$ is the standard
left-invariant sum-of-squares sub-Laplacian. The operator $\Delta_\chi$ generates a diffusion semigroup, namely $(\e^{-t\Delta_\chi})_{t>0}$ extends to a contraction semigroup on
$L^p(\mu_\chi)$ for every $p \in [1, \infty]$ whose
infinitesimal generator we still
denote by $\Delta_\chi$;  see~\cite{HMM, BPTV, BPV1,BPV2} for more on these matters.

When $p\in (1,\infty)$ and $\alpha> 0$, we define the Sobolev spaces $L^p_\alpha(\mu_\chi)$ as the space of functions $f\in L^p(\mu_\chi)$ such that $(\tau_\chi I +
\Delta_{\chi})^{\alpha/2} f\in L^p(\mu_\chi)$, endowed with the norm 
\[
\| f\|_{L^p_{\alpha}(\mu_\chi)} = \|(\tau_\chi I + \Delta_\chi)^{\alpha/2}f\|_{L^p(\mu_\chi)},
\]
where
\begin{equation}\label{fixedtranslationchi}
\tf_\chi = \max \left\{\frac{2}{b} \left[ \cf(\delta \chi^{-1}) +2D+b_0 \right]^2 - \frac{1}{4}\cf(\chi)^2, 1\right\}
\end{equation}
is the counterpart (or generalized version) of~\eqref{fixedtranslation}. Observe that $\cf(\delta\chi^{-1}) =0$ if $\chi = \delta$ or, equivalently, if $\mu_\chi = \lambda$, so our notation is coherent with the one used in previous sections.

\smallskip

We recall from~\cite{BPTV} that an embedding as the one of Theorem~\ref{teo:embed} fails if $\lambda$ is replaced by any other measure $\mu_\chi$; and as we show below in Remark~\ref{rem:muchi}, a global Moser--Trudinger inequality as Theorem~\ref{teo:GMT2} also does not hold if $\mu_\chi\neq \lambda$. Nevertheless, we can prove an alternative version of Sobolev embedding, and a local Moser--Trudinger {inequality} (that is, for compactly
supported functions). We shall first need to extend some definitions and results, given above in the case of the left measure $\lambda$, to the case of $\mu_\chi$.

We denote by $p_t^\chi$
the convolution kernel of $\e^{-t\Delta_\chi}$, and we recall that by
~\cite[Theorem IX.1.3]{VCS}, equivalently~\eqref{heatkernelestimate}, and~\cite[eq.\ (2.8)]{BPTV},
\begin{equation}\label{heatkernelestimatechi}
p_t^\chi(x)  \leq c \, (\delta \chi^{-1})^{1/2}(x)\, (1\wedge t)^{-\frac{d}{2}} \, \e^{-\frac{1}{4} t \cf(\chi)^2}\,  \e^{-b \frac{|x|^2}{t}}, \qquad x\in G,\,t>0
\end{equation}
where $b$ and $c$ are those of~\eqref{heatkernelestimate}.

For $\alpha>0$, let $G_{\chi}^{\alpha}$ be the convolution kernel of $(\tau_\chi I +\Delta_\chi)^{-\alpha/2}$, and define $ G_{\chi}^{\alpha, \loc} =G_{\chi}^{\alpha}\mathbf{1}_{B_1}$ and  $G_{\chi}^{\alpha, \glob} = G_{\chi}^{\alpha}\mathbf{1}_{B_1^c}$. The following result can be proved exactly in the same way as Lemma~\ref{Lemma4.1-revised}, and its proof is omitted.

\begin{lemma}\label{Lemma4.1-revisedchi}
  There exists $C=C(G, \mathbf{X})>0$ such
  that, for $\alpha\in (0,d)$ and $x\in G$,
\begin{align*}
|G_{\chi}^{\alpha, \loc} (x)| &\leq C\, \frac{\alpha}{d-\alpha} (\delta \chi^{-1})^{1/2}(x) |x|^{\alpha-d}\mathbf{1}_{B(e,1)}(x) ,\\
|G_{\chi}^{\alpha, \glob}(x)| &\leq  C \, (\delta \chi^{-1})^{1/2}(x)\, \e^{-(2D+\cf(\delta \chi^{-1})+b_0)|x|}\mathbf{1}_{B(e,1)^c}(x).
\end{align*}
  \end{lemma}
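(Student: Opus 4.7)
The plan is to replicate the proof of Lemma~\ref{Lemma4.1-revised} line by line, carrying along the additional factor $(\delta\chi^{-1})^{1/2}(x)$ that now appears in the heat kernel bound~\eqref{heatkernelestimatechi} and verifying that the modified shift $\tau_\chi$ from~\eqref{fixedtranslationchi} still produces an exponential decay rate of at least $2D + \cf(\delta\chi^{-1}) + b_0$.

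First I would use the subordination representation
\[
G_\chi^\alpha(x) = \frac{1}{\Gamma(\alpha/2)}\int_0^\infty t^{\alpha/2-1}\,\e^{-\tau_\chi t}\,p_t^\chi(x)\,\dd t,
\]
and insert~\eqref{heatkernelestimatechi} to obtain
\[
G_\chi^\alpha(x)\le \frac{C\,(\delta\chi^{-1})^{1/2}(x)}{\Gamma(\alpha/2)}\int_0^\infty t^{\alpha/2-1}(1\wedge t)^{-d/2}\e^{-at}\,\e^{-b|x|^2/t}\,\dd t,
\]
where $a=\tau_\chi+\tfrac14\cf(\chi)^2$. The factor $(\delta\chi^{-1})^{1/2}(x)$ pulls out of the integral immediately, so what remains is exactly the type of integral analyzed in Lemma~\ref{Lemma4.1-revised}, up to the value of $a$.

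For the global part ($|x|\ge 1$) I would apply the AM--GM inequality $at+b|x|^2/t\ge \tfrac12(at+b/t+\sqrt{2ab}\,|x|)$ to extract $\e^{-\tfrac12\sqrt{2ab}\,|x|}$ from the exponential, and estimate the residual integral $\int_0^\infty t^{\alpha/2-1}(1\wedge t)^{-d/2}\e^{-at/2-b/(2t)}\,\dd t$ by an absolute constant (it converges at both endpoints uniformly in $\alpha\in(0,d)$ because $a\ge 1$). The crucial arithmetic check is that, by the very definition~\eqref{fixedtranslationchi} of $\tau_\chi$,
\[
a=\tau_\chi+\tfrac14\cf(\chi)^2\ge \tfrac{2}{b}\bigl[\cf(\delta\chi^{-1})+2D+b_0\bigr]^2,
\]
so that with $b_0=\sqrt{b}/2$ one obtains $\tfrac12\sqrt{2ab}\ge \cf(\delta\chi^{-1})+2D+b_0$, which is exactly the rate claimed.

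For the local part ($|x|\le1$) I would split the integral at $t=1$. The tail $\int_1^\infty t^{\alpha/2-1}\e^{-at}\e^{-b|x|^2/t}\,\dd t$ is bounded uniformly because $a\ge 1$ and $\alpha<d$, giving a contribution of order $1$ (with an $\alpha$ from $1/\Gamma(\alpha/2)\sim \alpha/2$ in front). The short-time piece $\int_0^1 t^{(\alpha-d)/2-1}\e^{-b|x|^2/t}\,\dd t$, after the change of variable $u=b|x|^2/t$ and splitting $\int_{b|x|^2}^1+\int_1^\infty$, yields $|x|^{\alpha-d}$ times a factor $(d-\alpha)^{-1}(1-|x|^{d-\alpha})+O(1)$, exactly as in Lemma~\ref{Lemma4.1-revised}. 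Combining the contributions and multiplying by $(\delta\chi^{-1})^{1/2}(x)$ gives the local estimate.

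The only genuine obstacle, as opposed to bookkeeping, is confirming that the lower bound on $\tau_\chi$ in~\eqref{fixedtranslationchi} is precisely calibrated so that the exponential factor surviving from the large-$t$ analysis dominates $\e^{-(2D+\cf(\delta\chi^{-1})+b_0)|x|}$; once this is checked, everything else is a transcription of the argument for Lemma~\ref{Lemma4.1-revised} with $\chi$ in place of $\delta$ and the universal carrier $(\delta\chi^{-1})^{1/2}(x)$ inherited from the heat kernel bound.
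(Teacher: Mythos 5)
Your proposal is correct and is precisely what the authors intend: the paper explicitly omits the proof, stating that it ``can be proved exactly in the same way as Lemma~\ref{Lemma4.1-revised},'' and your transcription with the extra $(\delta\chi^{-1})^{1/2}(x)$ factor from~\eqref{heatkernelestimatechi} is exactly that argument. Your arithmetic check that $a=\tau_\chi+\tfrac14\cf(\chi)^2\ge\tfrac2b[\cf(\delta\chi^{-1})+2D+b_0]^2$ (both when the first expression in the max defining $\tau_\chi$ dominates and when the floor $1$ does) and hence $\tfrac12\sqrt{2ab}\ge\cf(\delta\chi^{-1})+2D+b_0$ is the one nontrivial point, and you carry it out correctly; note only that this step is pure algebra and does not actually use $b_0=\sqrt b/2$.
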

Define now $\sf(\chi)= \max_{B_1} \chi\delta^{-1} = \e^{\cf(\chi\delta^{-1})}$, and observe that $\sf(\chi) \geq 1$ for all $\chi$'s.

\begin{proposition}\label{l: embeddingmuchi}
Let $p\in (1,\infty)$ and $q\in [p,\infty)$. There exists  $A_2=A_2(G, \bX)>0$ such that 
\begin{equation}\label{embedding}
  \| f\|_{L^q(\mu_{\chi^{q/p}\delta^{1-q/p}})} \leq
  \frac{A_2 \, \sf(\chi) }{p-1} \left(1+\frac{q}{p'}\right)^{\frac{1}{q}+\frac{1}{p'}} \| f\|_{ L^{p}_{d/p}(\mu_\chi)}
\end{equation}
for all $f\in L^{p}_{d/p}(\mu_\chi)$.
\end{proposition}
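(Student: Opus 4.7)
The plan is to transfer the weighted Sobolev estimate to an unweighted convolution estimate on $L^p(\lambda)\to L^q(\lambda)$, by absorbing the weight into the kernel using the character $\chi/\delta$. Setting $g=(\tau_\chi I+\Delta_\chi)^{d/(2p)}f$, one has $\|g\|_{L^p(\mu_\chi)}=\|f\|_{L^p_{d/p}(\mu_\chi)}$ and $f=g*G_\chi^{d/p}$. Since $\dd\mu_{\chi^{r/p}\delta^{1-r/p}}=(\chi/\delta)^{r/p}\,\dd\lambda$, the multiplication $V\colon h\mapsto h\,(\chi/\delta)^{1/p}$ is an isometry $L^r(\mu_{\chi^{r/p}\delta^{1-r/p}})\to L^r(\lambda)$ for every $r\ge 1$; in particular it pairs up both endpoints of the desired embedding. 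Using that $(\chi/\delta)^{1/p}$ is a character, so $(\chi/\delta)^{1/p}(x)=(\chi/\delta)^{1/p}(xy)(\chi/\delta)^{1/p}(y^{-1})$, a short computation shows $Vf=(Vg)*K$ where
\[
K(z)=(\chi/\delta)^{1/p}(z)\,G_\chi^{d/p}(z),
\]
so the proposition reduces to proving $\|\tilde g*K\|_{L^q(\lambda)}\le A_2\sf(\chi)(p-1)^{-1}(1+q/p')^{1/p'+1/q}\|\tilde g\|_{L^p(\lambda)}$ with $\tilde g=Vg$.

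Next, I would split $K=K^{\loc}+K^{\glob}$ along $B_1$ and $B_1^c$ and invoke Lemma~\ref{Lemma4.1-revisedchi} with $\alpha=d/p$, where $\alpha/(d-\alpha)=1/(p-1)$. Because $(\chi/\delta)^{1/p}(\delta\chi^{-1})^{1/2}=(\chi/\delta)^{1/p-1/2}$, which is bounded on $B_1$ by $\sf(\chi)^{|1/p-1/2|}\le\sf(\chi)$ (using that $B_1$ is symmetric and $\chi/\delta$ is a positive character), one gets
\[
|K^{\loc}(z)|\le \frac{C\sf(\chi)}{p-1}\,|z|^{-d/p'}\mathbf{1}_{B_1}(z).
\]
On $B_1^c$, the same character factor grows at most like $e^{\cf(\delta\chi^{-1})|z|/2}$, which is absorbed by the exponential decay of Lemma~\ref{Lemma4.1-revisedchi}, so $|K^{\glob}(z)|\le Ce^{-(2D+b_0)|z|}\mathbf{1}_{B_1^c}(z)$. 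This is precisely the kernel treated in the proof of~\eqref{claim2}, hence Young's inequality~\eqref{Young} yields $\|\tilde g*K^{\glob}\|_{L^q(\lambda)}\le C(G,\mathbf{X})\|\tilde g\|_{L^p(\lambda)}$.

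The delicate part is the local estimate. I would apply Young's inequality~\eqref{Young} with $s\in[1,p')$ defined by $1/s=1/p'+1/q$. Polar integration together with $V(\rho)\le C\rho^d$ for $\rho\le 1$ gives
\[
\|K^{\loc}\|_{L^s(\lambda)}\le \frac{C\sf(\chi)}{p-1}\Big(\frac{p'}{d(p'-s)}\Big)^{1/s},
\]
and the same bound for $\|\widecheck{K^{\loc}}\|_{L^s(\lambda)}$ since $|z^{-1}|=|z|$ and the character weight is again controlled by $\sf(\chi)$. The two key identities $p'-s=p'^2/(p'+q)$ and $s/p'+s/q=1$ then combine with Young's inequality to produce $\|\tilde g*K^{\loc}\|_{L^q(\lambda)}\le \tfrac{C\sf(\chi)}{p-1}(1+q/p')^{1/p'+1/q}\|\tilde g\|_{L^p(\lambda)}$, after absorbing the factor $d^{-(1/p'+1/q)}\le 1$ into $A_2$. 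Finally, the uniform inequality $\tfrac{1}{p-1}(1+q/p')^{1/p'+1/q}\ge p'\ge 1$ for $q\ge p$ (checked at $q=p$ where the left-hand side equals $p'$, and monotone increasing in $q$) shows that the bounded global contribution is dominated by the same constant, completing the proof. The main technical obstacle is the exponent bookkeeping in Young's inequality: one must verify that $\sf(\chi)$ appears to the first power (using that the sum of Young weights $s/p'+s/q$ equals exactly $1$) and that the factor $(1+q/p')$ emerges with exponent precisely $1/p'+1/q$, which rests on the algebraic identity $p'/(p'-s)=1+q/p'$.
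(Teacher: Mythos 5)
Your proof is correct and follows essentially the same route as the paper: you reduce the weighted estimate to an unweighted convolution bound on $L^p(\lambda)\to L^q(\lambda)$ by pushing the character $(\chi/\delta)^{1/p}$ through the convolution, then invoke Lemma~\ref{Lemma4.1-revisedchi} with $\alpha=d/p$, split the kernel along $B_1$, and apply Young's inequality~\eqref{Young} with the exponent $s$ satisfying $1/s=1/p'+1/q$ (the paper's $r$). The algebraic identities you identify, $s/p'+s/q=1$ (so $\sf(\chi)$ comes out linearly) and $p'/(p'-s)=1+q/p'$, are exactly the ones the paper uses implicitly when it evaluates $\bigl(\int_0^1 u^{(d/p-d)r+d-1}\,du\bigr)^{1/r}$. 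The only cosmetic differences are that you make the isometry $V$ explicit whereas the paper just writes out the equality, you apply Young separately to $K^{\loc}$ and $K^{\glob}$ rather than estimating the $L^r$-norm of the full kernel, and you spell out the final absorption of the bounded global contribution (your monotonicity claim is indeed correct: $\frac{d}{dq}\log\bigl[(1+q/p')^{1/p'+1/q}\bigr]=\frac{1}{q}\bigl(\frac{1}{p'}-\frac{\log(1+q/p')}{q}\bigr)>0$ since $u>\log(1+u)$). Be aware that "polar integration" should be understood as the dyadic decomposition of $B_1$ together with $V(\rho)\le C\rho^d$, which is what the paper carries out; there is no literal polar coordinate system on a general Lie group.
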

\begin{proof}
By Young's inequality~\eqref{Young}, we obtain that 
\begin{align}
&  \|(\tau_\chi I+\Delta_\chi)^{-d/2p}g\|_{L^q(\mu_{\chi^{q/p}\delta^{1-q/p}})}  \notag\\
  & \quad= \|(\chi\delta^{-1})^{1/p}g \ast (\chi\delta^{-1})^{1/p} G_\chi^{d/p}\|_{L^q(\lambda)} \nonumber \\
 & \quad\leq  \|(\chi\delta^{-1})^{1/p} g\|_{L^p(\lambda)}  \|
   (\chi^{-1}\delta)^{1/p} \widecheck{G}_\chi^{d/p}\|_{L^r(\lambda)}^{r/{p'}} \,\|
    (\chi\delta^{-1})^{1/p} G_\chi^{d/p}\|_{L^r(\lambda)}^{r/{q}} \nonumber \\
  & \quad= \| g\|_{L^p(\mu_\chi)}  \| (\chi^{-1}\delta)^{1/p}
    \widecheck{G}_\chi^{d/p}\|_{L^r(\lambda)}^{r/{p'}} \,\| (\chi\delta^{-1})^{1/p} G_\chi^{d/p}\|_{L^r(\lambda)}^{r/{q}},  \label{appl-Young}
\end{align}
where
$r\in (1,\infty)$ is such that
$\frac1p+\frac1r=1+\frac1q$. We split $G_\chi^{d/p}$ into $ G_{\chi}^{d/p, \loc}$ and $G_{\chi}^{d/p, \glob}$, and estimate the integrals of the two terms separately.

By Lemma~\ref{Lemma4.1-revisedchi}, we obtain
\begin{align*}
  \| (\chi\delta^{-1})^{1/p} G_\chi^{d/p,\loc}\|_{L^r(\lambda)} 
  & \le \frac{C}{p-1}\,  \Big(\sum_{k=0}^\infty \int_{2^{-k-1}<|x|\le 2^{-k}} (\delta \chi^{-1})^{r(\frac12-\frac1p)}(x)    |x|^{r(d/p-d)} \,\dd\lambda(x)\Big)^{1/r}\\
  &\leq \frac{C}{p-1}\, \sf(\chi) \Big(\sum_{k=0}^\infty 2^{-kr(d/p-d)-kd}\Big)^{1/r} \\
  & \leq \frac{C}{p-1}\, \sf(\chi)  \Big(\int_0^1 u^{(d/p-d)r} u^{d-1}\, \dd u\Big)^{1/r} = \frac{C\, \sf(\chi) }{p-1}\,\left(1+ \frac{q}{p'}\right)^{\frac{1}{q}+\frac{1}{p'}},
\end{align*}
where we used that
\begin{align}\label{supmodulo}
\sup_{y\leq |x| } (\delta \chi^{-1})^{1/2-1/p}(y) = \sup_{y\leq |x|} (\delta \chi^{-1})^{|1/2-1/p|}(x) = \e^{\cf(\chi \delta^{-1})|x|},
\end{align}
and that $|1/2-1/p|\leq 1$.

As for the global part of the kernel, using again~\eqref{supmodulo},
\begin{align}    
  \| (\chi\delta^{-1})^{1/p} G_\chi^{d/p,\glob}\|_{L^r(\lambda)}   
  &\leq C  \Big(\int_0^\infty (\chi\delta^{-1})^{r(1/p-1/2)} \e^{-r(2D+\cf(\chi \delta^{-1}) +b_0)|x|} \,\dd\lambda \Big)^{1/r}\notag \\
 & \leq C  \Big(\int_0^\infty  \e^{-r(2D+b_0)|x|} \,\dd\lambda \Big)^{1/r}\notag \\
  & \le C\,   \Big(\sum_{k=0}^\infty  \e^{-r(2D+b_0) 2^k+D2^{k+1}} \Big)^{1/r}\leq C .
  \label{est-G-dp}
\end{align}
The term $\| \widecheck{G}_{d/p}^c\|_{L^r(\lambda)}$ can be estimated in the same way, in view of~\eqref{supmodulo} and by the radiality of the other terms appearing in the bound of  Lemma~\ref{Lemma4.1-revisedchi}.
\end{proof}

Keeping the notation of Proposition~\ref{l: embeddingmuchi}, for $1<p<\infty$ we define 
\[
\gamma_2= \left[ \e \, \left(\frac{A_2 \sf(\chi)^{2}}{p-1}\right)^{p'} \right]^{-1}.
\]
 The following result is inspired by~\cite{Str}. 
 \begin{theorem}\label{teo:LMT}
Let $p\in (1,\infty)$. For $\gamma \in [0,\gamma_2)$,
\[
\sup_{\| f\|_{L^p_{d/p}(\mu_\chi)}\leq 1,\: \supp f\subseteq B(e,1) } \int_G \left( \exp(\gamma |f|^{p'})-1 \right)\, \dd \mu_\chi <\infty.
\]
\end{theorem}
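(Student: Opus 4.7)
The plan is to mimic the proof of Theorem~\ref{teo:GMT2}, that is, to expand the exponential as a power series and estimate the resulting $L^{p'k}$ norms via the Sobolev embedding of Proposition~\ref{l: embeddingmuchi}, with the additional step of transferring estimates back to the measure $\mu_\chi$ by exploiting the compact support assumption. Concretely, for $f$ with $\supp f \subseteq B(e,1)$ one writes
\[
\int_G \bigl( \exp(\gamma |f|^{p'})-1 \bigr)\, \dd \mu_\chi
= \sum_{k\geq 1} \frac{\gamma^k}{k!} \|f\|_{L^{p'k}(\mu_\chi)}^{p'k},
\]
so the task reduces to bounding $\|f\|_{L^{p'k}(\mu_\chi)}$ uniformly in $k$ in terms of $\|f\|_{L^p_{d/p}(\mu_\chi)}\le1$.

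The first key step is to move from $L^{p'k}(\mu_\chi)$ to $L^{p'k}(\mu_{\chi^{p'k/p}\delta^{1-p'k/p}})$. On $B(e,1)$ the Radon–Nikodym ratio of the two densities (with respect to $\rho$) is $(\chi\delta^{-1})^{1-p'k/p}$; since $\chi\delta^{-1}$ is a character and $B_1$ is symmetric, one has $\min_{B_1}(\chi\delta^{-1}) = \sf(\chi)^{-1}$, and thus
\[
\sup_{B_1} (\chi\delta^{-1})^{1-p'k/p} = \sf(\chi)^{p'k/p-1},
\]
which yields $\|f\|_{L^{p'k}(\mu_\chi)} \le \sf(\chi)^{1/p}\,\|f\|_{L^{p'k}(\mu_{\chi^{p'k/p}\delta^{1-p'k/p}})}$ (using $\sf(\chi)\geq1$). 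Applying now Proposition~\ref{l: embeddingmuchi} with $q=p'k$, and bounding $\sf(\chi)^{1+1/p}\le\sf(\chi)^{2}$, one obtains
\[
\|f\|_{L^{p'k}(\mu_\chi)} \le C_0\,(1+k)^{\frac{1}{p'k}+\frac{1}{p'}}, \qquad C_0 := \frac{A_2\,\sf(\chi)^{2}}{p-1},
\]
after using $\|f\|_{L^p_{d/p}(\mu_\chi)}\le1$ and noting that $q/p' = k$ inside the bracket of Proposition~\ref{l: embeddingmuchi}.

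Raising to the $p'k$ power and plugging back into the series gives
\[
\int_G \bigl( \exp(\gamma |f|^{p'})-1 \bigr)\, \dd \mu_\chi
\leq \sum_{k\geq 1} \frac{\gamma^k}{k!} C_0^{\,p'k}\,(1+k)^{1+k}.
\]
The crude Stirling bound $k!\ge (k/\e)^k$ together with $(1+1/k)^{k}\le \e$ give $(1+k)^{1+k}/k! \le \e\,(1+k)\,\e^{k}$, so the series is dominated by
\[
\e \sum_{k\geq 1} (k+1)\bigl(\e\,\gamma\, C_0^{\,p'}\bigr)^{k} = \e \sum_{k\geq 1} (k+1)(\gamma/\gamma_2)^k,
\]
which converges for all $\gamma<\gamma_2$ to a bound that is independent of $f$, proving the claim.

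The computations are essentially routine; the only conceptual point that requires care, and which I expect to be the main obstacle, is the transfer step between the measures $\mu_\chi$ and $\mu_{\chi^{p'k/p}\delta^{1-p'k/p}}$: one must check that the exponential factor $\sf(\chi)^{p'k/p-1}$ picked up from the support condition does not destroy convergence. Because after taking the $(p'k)^{\text{th}}$ root this factor stabilises to $\sf(\chi)^{1/p}$, it is absorbed into a constant independent of $k$, and the final threshold $\gamma_2$ comes out exactly as stated. This compact-support trick is precisely what is unavailable for general (non-compactly supported) $f$, which is why only a \emph{local} Moser–Trudinger inequality is available when $\mu_\chi\neq\lambda$.
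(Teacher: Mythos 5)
Your approach matches the paper's: expand the exponential, transfer from $\mu_\chi$ to $\mu_{\chi^{q/p}\delta^{1-q/p}}$ on the compact support, and invoke Proposition~\ref{l: embeddingmuchi}. The transfer step and the constant bookkeeping (absorbing $\sf(\chi)^{1/p}\cdot\sf(\chi)$ into $\sf(\chi)^2$, Stirling for the tail) are correct, and the threshold $\gamma_2$ comes out right.

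There is, however, a genuine gap: you apply Proposition~\ref{l: embeddingmuchi} with $q=p'k$ for \emph{every} $k\ge1$, but that proposition requires $q\ge p$, i.e.\ $p'k\ge p$, i.e.\ $k\ge p-1$. When $p\le 2$ this is automatic, but for $p>2$ the low-order terms $1\le k<p-1$ fall outside the scope of the proposition (the exponents $q=p'k$ are then \emph{smaller} than $p$, and no Sobolev-type gain is available there). These terms cannot simply be estimated via the embedding; they must be handled separately. The paper does this by observing that for $p'k<p$ one can use H\"older's inequality on the (finite-measure) support $B(e,1)$ together with the trivial bound $\|f\|_{L^p(\mu_\chi)}\le \|(\tau_\chi I+\Delta_\chi)^{-d/(2p)}\|_{L^p\to L^p}=:C(\chi,p)$, giving $\|f\|_{L^{p'k}(\mu_\chi)}^{p'k}\le C(\chi,p)^{p'k}\mu_\chi(B(e,1))^{1-k(p'-1)}$, a finite sum that contributes only a $(G,\mathbf X,\chi,p)$-dependent constant. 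Once you add this finite initial block, your tail estimate for $k\ge p-1$ closes the argument exactly as you describe and gives the same $\gamma_2$.
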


\begin{proof}
We first notice that if $f$ is supported in $B_1$ and $q> p$, then
\[
\| f\|_{L^q(\mu_\chi)}  = \| (\chi \delta^{-1})^{\frac{1}{q}-\frac{1}{p}} f\|_{L^q(\mu_{\chi^{q/p}\delta^{1-q/p}})} \leq \sf(\chi )\| f\|_{L^q(\mu_{\chi^{q/p}\delta^{1-q/p}})} ,
\] 
so by Proposition~\ref{l: embeddingmuchi} 
\begin{align}
\| f\|_{L^q(\mu_\chi)} & \leq \frac{A_2\, \sf(\chi)^2}{p-1} \left(1+\frac{q}{p'}\right)^{\frac{1}{q}+\frac{1}{p'}} \| f\|_{ L^{p}_{d/p}(\mu_\chi)}.\label{Eq1}
\end{align}
If $f$ is supported in $B_1$ and $\|f\|_{L^p_{d/p}(\mu_\chi)} \leq
1$, then 
\[
\|f\|_{L^p(\mu_\chi)} \leq \|(\tau_\chi I+\Delta_\chi)^{-d/{2p}} \|_{L^p(\mu_\chi)\rightarrow L^p(\mu_\chi)}=C(\chi,p),
\]
and
\begin{align*}
\int_G &\left( \exp(\gamma |f|^{p'})-1 \right)\, \dd \mu_\chi= \sum_{k=1}^\infty \frac{\gamma^k}{k!} \|f\|_{L^{p'k}(\mu_\chi)}^{p'k} \\
& \leq C(\chi,p) \sum_{1\leq k<p/p'} \frac{\gamma^k}{k!} \mu_\chi(B(e,1))^{1- k(p'-1)}  +  \sum_{k\geq p/p'} \frac{\gamma^k}{k!} \left( \frac{A_2\, \sf(\chi)^2}{p-1}\right)^{p'k} (k+1)^{k+1}\,,
\end{align*}
where we applied~\eqref{Eq1} when $kp'\geq p$, and H\"older's
inequality and the support condition of $f$ if $kp'< p$. If $\gamma\in [0,\gamma_2)$, then the latter series is convergent and the theorem is proved. 
\end{proof}

\begin{remark}\label{rem:muchi}
Theorem~\ref{teo:GMT2} does not hold with any other $\mu_\chi$ in place of $\lambda$. Indeed, if there exist $p\in (1,\infty)$, $C>0$ and $\gamma>0$ such that for all $f\in L^p_{d/p}(\mu_\chi)$, $\|f\|_{L^p_{d/p}(\mu_\chi)}\leq 1$,
\begin{equation}\label{disprove}
\int_G \Big( \exp(\gamma |f|^{p'})-\sum_{0\leq k<p-1} \frac{\gamma^k}{k!} |f|^{p'k} \Big)\, \dd \mu_\chi \leq C\|f\|_{L^p(\mu_\chi)}^p,
\end{equation}
then necessarily $\mu_\chi = \lambda$.

To see this, assume that~\eqref{disprove} holds for all $f\in
L^p_{d/p}(\mu_\chi)$, $\|f\|_{L^p_{d/p}(\mu_\chi)}\leq 1$, with
$\mu_\chi \neq \lambda$, i.e.\ $\chi \neq \delta$. We first prove that
then~\eqref{disprove} holds for all $f\in L^p_{d/p}(\mu_\chi)$, with
no restriction on its norm (other than being finite). Recall, indeed,
that for any $y\in G$ and $f\in  L^p_{d/p}(\mu_\chi)$, denoting by
$L_y$ the left translation by $y\in G$, one has
\[
\| L_y f\|_{ L^p_{d/p}(\mu_\chi)} = (\chi \delta^{-1})^{1/p}(y) \|f\|_{ L^p_{d/p}(\mu_\chi)}.
\]
Since $(\chi \delta^{-1})^{-1/p}$ is a positive nonconstant character, it is unbounded; thus there exists $y\in G$ such that
\[
(\chi \delta^{-1})^{-1/p}(y) \geq \|f\|_{ L^p_{d/p}(\mu_\chi)}.
\] 
Equivalently, $(\chi \delta^{-1})^{1/p}(y) \|f\|_{ L^p_{d/p}(\mu_\chi)} \leq 1$, hence  $\| L_y f\|_{ L^p_{d/p}(\mu_\chi)}\leq 1$. Thus, we may apply~\eqref{disprove} to $L_yf$; and by a change of variable, one obtains~\eqref{disprove} for $f$ where the constant $C$  does not depend on the norm of $f$.

But~\eqref{disprove} cannot hold without restriction on the norm of $f\in L^p_{d/p}(\mu_\chi)$. Indeed, let $\sigma\geq 1$ and consider $\sigma f$, which still belongs to $L^p_{d/p}(\mu_\chi)$ for any $\sigma$. Then, by~\eqref{disprove} applied to $\sigma f$,
\begin{align*}
 \int_ G \sum_{k\geq p-1} \frac{\gamma^k}{k!} \sigma^{p'k} |f|^{p'k} \, \dd \mu_\chi 
 & \leq C \, \sigma^p \|f\|_{L^p(\mu_\chi)}^p.
\end{align*}
Since
\[ \int_ G \sum_{k\geq p-1} \frac{\gamma^k}{k!} \sigma^{p'k} |f|^{p'k} \, \dd \mu_\chi  \geq \int_ G \sum_{k\geq p} \frac{\gamma^k}{k!} \sigma^{p'k} |f|^{p'k} \, \dd \mu_\chi  \geq \sigma^{pp'} \int_ G \sum_{k\geq p} \frac{\gamma^k}{k!} |f|^{p'k} \, \dd \mu_\chi, 
\]
one obtains
\[
\sigma^{p(p'-1)} \int_ G \sum_{k\geq p} \frac{\gamma^k}{k!} |f|^{p'k} \, \dd \mu_\chi \leq C \|f\|_{L^p(\mu_\chi)}^p
\]
for all $\sigma\geq 1$, which is a contradiction since $p(p'-1)>0$.
\end{remark}

\end{document}